\newtheorem{definition}{Definition}
\newtheorem{theorem}{Theorem}
\theoremstyle{definition}
\newtheorem{example}{Example}
\theoremstyle{plain}
\newtheorem{proposition}{Proposition}
\def\R{\mathbb R}
\def\C{\mathcal C}
\def\N{\mathcal N}
\def\S{\mathcal S}
\def\E{\mathcal E}
\def\diag{\mathrm{diag}}
\def\Var{\mathrm{Var}}
\title{On weighted optimality of experimental designs}
\author{Samuel Rosa}
\affil{Faculty of Mathematics, Physics and Informatics, Comenius University, Bratislava, Slovakia}
\date{\today} 
\begin{document}
	
\maketitle

\begin{abstract}
	When the experimental objective is expressed by a set of estimable functions, and any eigenvalue-based optimality criterion is selected, we prove the equivalence of the recently introduced weighted optimality and the 'standard' optimality criteria for estimating this set of functions of interest. Also, given a weighted eigenvalue-based criterion, we construct a system of estimable functions, so that the optimality for estimating this system of functions is equivalent to the weighted optimality. This allows one to use the large body of existing theoretical and computational results for the standard optimality criteria for estimating a system of interest to derive theorems and numerical algorithms for the weighted optimality of experimental designs. 
	Moreover, we extend the theory of weighted optimality so that it captures the experimental objective consisting of any system of estimable functions, which was not the case in the literature on weighted optimality so far. For any set of estimable functions, we propose a corresponding weight matrix of a simple form, and with a straightforward interpretation. Given a set of estimable functions with their corresponding weights, we show that it is useful to distinguish between the primary weights selected by the experimenters and the secondary weights implied by the weight matrix.
\end{abstract}

\section{Introduction}

In a given experiment, certain parameters or functions of parameters may be of greater interest than others. Then, the experimental design should reflect this. In the optimal design theory, such objectives can be expressed using the weighted optimality criteria, developed by \cite{MorganWang10} and \cite{MorganWang}. These authors consider models with treatment effects in which various weights are placed on the particular treatments. They build a diagonal weight matrix, and propose a weighted information matrix that captures the given weights.

However, \cite{StallingsMorgan} found the approach of Morgan and Wang too restrictive, and therefore extended the theory of the weighted optimality. They propose a weighted information matrix for any, generally non-diagonal, positive definite weight matrix. Such an approach allows one to capture situations, in which the experimental objective is to estimate a given set of estimable functions (possibly with assigned various weights).

Although the weighted optimality criteria provide a meaningful approach of expressing the interest in a given set of linear estimable functions, such an objective can also be expressed directly, using the theory of optimality for estimating the given functions of interest (e.g., see \cite{puk}). Let us denote the system of functions of interest as $Q^T\tau$, where $Q$ is a given coefficient matrix and $\tau$ is the vector of the effects of interest. Then, the theory of optimality for $Q^T\tau$ makes use of the information matrix for $Q^T\tau$, as opposed to the information matrix for the vector $\tau$.

The optimality for a system of interest is a well developed theory with a wide range of known results. Many of these results deal generally with the optimality criteria - notably the equivalence theorems (see Chapter 7 by \cite{puk}); and many more provide optimal designs for particular models, systems of estimable functions and optimality criteria. In blocking experiments, e.g., see \cite{MajumdarNotz} for comparing test treatment with a control, and \cite{Majumdar86} for comparing two sets of treatments; see \cite{RosaHarman16} for optimal designs in general models with treatment effects and nuisance effects. There is also a multitude of results on algorithms for constructing optimal (or efficient) designs for a system of functions of interest: e.g., rounding of optimal approximate designs (Chapter 12 by \cite{puk}), multiplicative algorithms (\cite{Yu10}), and mixed integer second order cone programming (\cite{SagnolHarman15}).
\bigskip

In this paper, we build upon the theory presented by \cite{StallingsMorgan}. In Section \ref{sOptimalityForQ}, we compare the weighted optimality and the 'standard' optimality for estimating a system of interest. We show that given a system of functions of interest $Q^T\tau$, the corresponding weighted optimality and the optimality for $Q^T\tau$ are equivalent for any eigenvalue-based criterion. Furthermore, we note that the weights assigned to the functions $Q^T\tau$ can be expressed by appropriately rescaling these functions. Note that most of the well-known criteria are eigenvalue-based; e.g., the $D$-, $A$- and $E$-optimality all depend only on the positive eigenvalues of the information matrix. 
We also consider the situation, where a weight matrix $W$ is selected, instead of a system of functions of interest. In such a case, we provide a method for constructing a system of functions of interest $Q^T\tau$ for the given weight matrix, so that for any eigenvalue-based criterion the optimality for $Q^T\tau$ is equivalent to the weighted optimality given by $W$.

When one considers a weighted optimality criterion, the provided equivalence with the optimality for a system of interest allows one to employ the tools of the extensive theory of the 'standard' optimality for a system of interest, e.g., the equivalence theorems and the algorithms for computing optimal exact designs.


Moreover, we extend the theory of the weighted optimality. The weight matrices corresponding to a system $Q^T\tau$ are defined by \cite{StallingsMorgan} only for systems that satisfy that $\mathrm{rank}(Q)$ is at least equal to the dimension of the set of all estimable functions. This, for instance, does not cover the case of $c$-optimality (see \cite{puk}, Chapter 2).
We propose to relax the definition of the weight matrix so that a weight matrix can be constructed for any system of estimable functions. Moreover, we define a weight matrix corresponding to a system $Q^T\tau$, which is of a slightly simpler form than that by \cite{StallingsMorgan}, although these two forms are equivalent with respect to the implied weighting of estimable functions. Finally, we note that it is advisable to differentiate between primary weights specified by the experimenters and secondary weights given by the weight matrix that corresponds to the system of interest $Q^T\tau$.

\subsection{Notation}

We define $1_n$ and $0_n$ as the column vectors of length $n$ of all ones and zeroes, respectively. The symbol $I_n$ denotes the identity matrix, and we define $J_n := 1_n 1_n^T$ and $0_{m \times n} := 0_m 0_n^T$. 
By $\mathrm{diag}(x_1,\ldots,x_n)$ we denote the $n \times n$ diagonal matrix with $x_1, \ldots, x_n$ on diagonal. The symbols $\mathfrak{S}^n_+$ and $\mathfrak{S}^n_{++}$ denote the sets of $n \times n$ non-negative definite matrices and positive definite matrices, respectively. 
Given $A\in \S^n_+$, we denote its ordered eigenvalues by $\lambda_1(A) \geq \ldots \geq \lambda_n(A)$. By $\C(A)$ and $\N(A)$ we denote the column space and the null space of the matrix $A$, respectively. Given a matrix $A$, the Moore-Penrose pseudoinverse of $A$ is denoted by $A^+$, and $A^-$ is any generalized inverse of $A$. By $A^{+1/2}$ we denote the matrix $(A^+)^{1/2}$.

\subsection{The model}

Consider the model studied by \cite{StallingsMorgan}:
\begin{equation}\label{eModel1}
y = X(\xi)\tau + L\beta + \varepsilon,
\end{equation} 
where $y=(y_1,\ldots,y_n)^T$ is the response, $\tau=(\tau_1, \ldots, \tau_v)^T$ is the vector of the parameters of interest, $\beta=(\beta_1,\ldots,\beta_m)^T$ is the vector of nuisance parameters, and $\varepsilon$ is the $n \times 1$ vector of uncorrelated errors with $E(\varepsilon)=0_n$ and $\Var(\varepsilon)=\sigma^2 I_n$. The (exact) design $\xi$ determines the matrix $X(\xi)$ that relates $\tau$ to $y$, and $L$ is the matrix relating $\beta$ to $y$, which is independent of the choice of the design $\xi$. Since the results in this paper do not depend on the variance of the errors, for simplicity we assume $\sigma^2=1$.

The information matrix for $\tau$ is $C(\xi)= X^T(\xi)(I-P_L)X(\xi)$, where $P_L=L(L^TL)^-L^T$ is the matrix of the orthogonal projection on $\C(L)$. A function $q^T \tau$ is estimable under a design $\xi$ if and only if $q \in \C(C(\xi))$.

As in \cite{StallingsMorgan}, we require for all competing designs to be able to estimate the same set of linear functions of the parameters $q^T\tau$. We define $\E$ as the estimation space, which is the set of all vectors $q$ corresponding to these estimable functions $q^T\tau$. That is, all competing designs have the same $\C(C(\xi))$, namely $\C(C(\xi))=\E$. Often, the estimation space $\E$ is the entire $\R^v$ or the set of all vectors orthogonal to $1_v$, denoted by $1_v^\perp$. We denote the $v \times v$ matrix of the orthogonal projection on $\E$ by $P_\tau$.

A common example of model \eqref{eModel1} is a model, where $\tau$ is the vector of treatment effects, and $\beta$ are some nuisance effects. In this case, the estimation space is $\E = 1_v^\perp$, which represents the set of all treatment contrasts.

We consider a system of $s$ estimable functions $Q^T\tau$, where $Q$ is a $v \times s$ matrix of rank $r$, satisfying $\C(Q) \subseteq \E$; we denote the columns of $Q$ as $q_1, \ldots, q_s$, and the elements of $Q$ as $Q_{ij}$.  We say that a design $\xi$ is feasible for $Q^T\tau$ if $Q^T\tau$ is estimable under $\xi$. 
We say that the system $Q^T\tau$ is unscaled (or normalized) if $\lVert q_i \rVert = 1$ for all $i$; otherwise, $Q^T\tau$ is said to be scaled.
Furthermore, if $r=s$, then we say that the system $Q^T\tau$ is of full rank; otherwise, $Q^T\tau$ is said to be rank-deficient. Note that for a system $Q^T\tau$ to attain the full rank, it is necessary that $s\leq\dim(\E)$.

If the interest in each of the functions in $Q^T\tau$ is not the same, we represent the various interest by weights $b_1, \ldots, b_s$ of the functions $q_1^T\tau, \ldots, q_s^T\tau$. We denote the matrix of weights of the functions of interest as $B=\diag(b_1,\ldots, b_s)$.

\subsection{Weighted optimality}\label{ssWeightedOpt}

For now, let us consider the weighted optimality criteria, as defined by \cite{StallingsMorgan}. That is, we say that the weight matrix is any $v \times v$ positive definite matrix $W$. For a given $W$ and any estimable function $q^T\tau$, the weight assigned to $q^T\tau$ is $(q^T W^{-1} q)^{-1}$ and the corresponding weighted variance is $\Var_W(\widehat{q^T\tau}) = (q^T W^{-1} q)^{-1} \Var(\widehat{q^T\tau})$.

Then, the weighted information matrix of a feasible design $\xi$ (given a weight matrix $W$) is defined as $C_W(\xi) = W^{-1/2} C(\xi) W^{-1/2}$. Given an optimality criterion $\Phi$, a design is $\Phi$-optimal with respect to $W$ (or, in short, $\Phi_W$-optimal) if it maximizes $\Phi_W(C(\xi)):=\Phi(C_W(\xi))$. Note that \cite{StallingsMorgan} consider optimality criteria that are to be minimized. The correspondence between the weighted variance and the weighted information matrix is represented by Lemma 1 by \cite{StallingsMorgan}, which states that the weighted variance of a given estimable function $q^T\tau$ is a convex combination of the inverses of the eigenvalues of $C_W(\xi)$.

Two weight matrices $W_1$ and $W_2$ are said to be estimation equivalent if $q^T W_1^{-1} q = c q^T W_2^{-1} q$ for all $q \in \E$ and for some constant $c$. Lemma 3 by \cite{StallingsMorgan} shows that $W_1$ and $W_2$ are estimation equivalent if and only if $P_\tau W_1^{-1} P_\tau = c P_\tau W_2^{-1} P_\tau$ for some constant $c$.
\bigskip

When the objective of the experiment is to estimate a set of $s$ normalized functions $Q^T\tau$, where $s$ is equal to the dimension of $\E$ and $r=s$, \cite{StallingsMorgan} propose a corresponding weight matrix $W_Q=I-P_\tau + QQ^T$, where $P_\tau$ is the orthogonal projector on $\E$. The inverse of such a matrix is $W_Q^{-1} = I-P_\tau + (QQ^T)^+$, and $W_Q$ places weight 1 on each of the functions of interest $q_i^T\tau$, i.e., $(q_i^T W_Q^{-1} q_i)^{-1} = 1$ for $i=1,\ldots,s$. In particular, it holds that $Q^T W_Q^{-1} Q = I_s$. If the interest in the functions $q_1^T\tau, \ldots, q_s^T\tau$ is represented by the weights $b_1, \ldots, b_s$, one can employ the weighted version of $Q$ given by $\tilde{Q}:=QB^{1/2}$. That is, the interest $b_i$ in $q_i^T\tau$ is expressed by rescaling the function of interest using $\tilde{q}_i=b_i^{1/2}q_i$, so that $\lVert \tilde{q}_i \rVert = b_i^{1/2}$.  Then, the corresponding weight matrix $W_{\tilde{Q}}$ places weight $b_j$ on $q_j^T\tau$, $j=1,\ldots, s$.

When the normalized system of contrasts $Q^T\tau$ satisfies $r=\dim(\E)$ and $s>r$ (i.e., when we have 'too many' functions of interest), the authors employ the same form of the corresponding weight matrix $W_Q$ as in the case $r=s=\dim(\E)$. However, in general, such a matrix does not place equal weights on each of the functions $q_i^T \tau$. That is not a desirable property, because these functions of interest should be assigned uniform weights. However, \cite{StallingsMorgan} show that the desired weights can be captured by the weighted $A$-optimality criterion, even with such a weight matrix.

\section{Optimality for a system of interest}\label{sOptimalityForQ}

\cite{StallingsMorgan} observed that the standard optimality criteria defined on the information matrices $C(\xi)$ place equal emphasis on all normalized estimable functions, which is an evident limitation of such approach. Consequently, the weighted optimality criteria present a way of eliminating such limitations, because they allow for various emphases on the estimable functions.

However, this limitation of the standard optimality criteria arises primarily not because of the criteria considered, but rather it is due to the narrow definition of the information matrix $C(\xi)$, which captures the information about exactly the vector $\tau$. When the interest is in a set of $s$ estimable functions $Q^T\tau$, where $r=s$, this limitation can be completely circumvented by considering the well-established \emph{information matrix} of a feasible design $\xi$ \emph{for the system of interest} $Q^T\tau$ (as opposed to the information matrix $C(\xi)$ for the vector $\tau$):
$$N_Q(\xi) = (Q^T C^-(\xi) Q)^{-1},$$
see \cite{puk}. Such an information matrix is proportional to the inverse of the variance matrix for the least-squares estimator of $Q^T\tau$.
Given a standard optimality criterion, a design is said to be \emph{$\Phi$-optimal for the system of interest $Q^T\tau$} (or $\Phi_Q$-optimal) if it maximizes $\Phi(N_Q(\xi))$.

In the second case in which \cite{StallingsMorgan} defined the matrix $W_Q$, i.e., when $s>r=\dim(\E)$, the information matrix for $Q^T\tau$ of the form $(Q^T C^-(\xi) Q)^{-1}$ is not well defined, as the inverse matrix does not exist. Instead of the inverse, one can consider the pseudo-inverse, resulting in the information matrix 
\begin{equation}\label{eInfMat}
N_Q(\xi) = (Q^T C^-(\xi) Q)^{+}
\end{equation}
for $s>r$ (e.g., see \cite{puk}). Note that the form \eqref{eInfMat} encompasses both the cases $s=r$ and $s>r$. Also note that the information matrix $N_Q(\xi)$ is well defined even for $r<\dim(\E)$.

We show that not only can the experimental objective of estimating a set of estimable functions $Q^T\tau$ be represented both by the corresponding weighted optimality as well as by the information matrix for $Q^T\tau$; these approaches are in fact equivalent for any criterion based only on the (non-negative) eigenvalues of the information matrix, as is proved in the following theorem. Note that the weighted optimality of \cite{StallingsMorgan} is built primarily for the eigenvalue-based optimality criteria, e.g., see Lemma 1 therein.

\begin{theorem}\label{tEigEquiv}
	Let $Q^T\tau$ be a system of estimable functions that satisfies $r=\dim(\E)$, and let $\xi$ be a feasible design for $Q^T\tau$. Then, the information matrix $N_Q(\xi) =  (Q^T C^-(\xi) Q)^{+}$ for $Q^T\tau$ and the weighted information matrix $C_{W_Q}(\xi) = W_Q^{-1/2} C(\xi) W_Q^{-1/2}$, where $W_Q = I-P_\tau + (QQ^T)$, have the same non-zero eigenvalues, including multiplicities.
\end{theorem}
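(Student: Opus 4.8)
The plan is to diagonalize the problem by passing to an orthonormal basis of the estimation space and reducing both matrices to a single $r \times r$ matrix. First I would record the structural consequence of the hypothesis: since $\C(Q) \subseteq \E$, $\mathrm{rank}(Q) = r$ and $\dim(\E) = r$, we have $\C(Q) = \E$. Fix a $v \times r$ matrix $U$ whose columns form an orthonormal basis of $\E$, so that $U^T U = I_r$ and $P_\tau = U U^T$. Because $\C(Q) = \E = \C(U)$, I can factor $Q = UM$ with $M$ an $r \times s$ matrix of full row rank $r$; then $QQ^T = U(MM^T)U^T$ with $MM^T \in \mathfrak{S}^r_{++}$, and consequently $(QQ^T)^+ = U(MM^T)^{-1}U^T$. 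Likewise, since $\C(C(\xi)) = \E$, I can write $C(\xi) = UGU^T$ with $G := U^T C(\xi) U \in \mathfrak{S}^r_{++}$, whence $C^+(\xi) = UG^{-1}U^T$.

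For the weighted side, I would use $W_Q^{-1} = I - P_\tau + (QQ^T)^+ = (I - UU^T) + U(MM^T)^{-1}U^T$ (given in the excerpt). Since $C_{W_Q}(\xi) = W_Q^{-1/2} C(\xi) W_Q^{-1/2}$ and $W_Q^{1/2}$ is invertible, $C_{W_Q}(\xi)$ is similar to $W_Q^{-1}C(\xi)$, hence they share all eigenvalues with multiplicities. A direct computation using $U^T U = I_r$ and $(I - UU^T)U = 0$ collapses the first summand and gives
\[
W_Q^{-1}C(\xi) = U(MM^T)^{-1}G\,U^T.
\]
By the standard fact that $AB$ and $BA$ have the same non-zero eigenvalues (here with $A = U(MM^T)^{-1}G$ and $B = U^T$), the non-zero eigenvalues of $C_{W_Q}(\xi)$ coincide, with multiplicities, with the eigenvalues of the $r \times r$ matrix $(MM^T)^{-1}G$.

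For the information-matrix side, I would first note that $\C(Q) \subseteq \C(C(\xi))$ makes $Q^T C^-(\xi) Q$ invariant to the choice of generalized inverse, so I may use $C^+(\xi)$; the orthonormality of $U$ then yields $Q^T C^+(\xi) Q = M^T G^{-1} M$. Applying the $AB$/$BA$ fact again, the non-zero eigenvalues of $M^T G^{-1} M$ are those of $G^{-1}MM^T$, an invertible $r \times r$ matrix with exactly $r$ positive eigenvalues. Taking the Moore--Penrose inverse reciprocates precisely the non-zero eigenvalues, so $N_Q(\xi) = (M^T G^{-1} M)^+$ has non-zero eigenvalues equal to those of $(G^{-1}MM^T)^{-1} = (MM^T)^{-1}G$, again with multiplicities. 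Comparing the two sides, both $N_Q(\xi)$ and $C_{W_Q}(\xi)$ have exactly $r$ non-zero eigenvalues, namely the eigenvalues of $(MM^T)^{-1}G$, which proves the claim.

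The routine algebra (the factorizations and the $AB$/$BA$ reductions) is straightforward; the step needing the most care is the rank-deficient case $s > r$, where both $N_Q(\xi)$ and $Q^T C^-(\xi) Q$ are singular and must be handled through pseudoinverses. Here I must make sure that $Q^T C^-(\xi) Q$ is genuinely invariant to the generalized inverse, that passing to $(\cdot)^+$ reciprocates only the non-zero eigenvalues, and that $(MM^T)^{-1}G$ --- a product of two positive definite matrices --- really has all $r$ eigenvalues positive, so that the count of non-zero eigenvalues agrees exactly on both sides even though $N_Q(\xi)$ is $s \times s$ and $C_{W_Q}(\xi)$ is $v \times v$.
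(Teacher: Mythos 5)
Your proof is correct, and it takes a genuinely different route from the paper's. Both arguments start the same way on the weighted side: by similarity, $C_{W_Q}(\xi)$ has the eigenvalues of $W_Q^{-1}C$, and the term $I-P_\tau$ is annihilated because $\C(C(\xi)) \subseteq \E$. From there the paper stays coordinate-free and works in the calculus of pseudoinverses: its key lemma is Greville's reverse-order law (Theorem 2 of Greville, 1966), whose two symmetry conditions it verifies via auxiliary factorizations $C = QQ^T Z$ and $(QQ^T)^+ = C^+ V$, concluding $((QQ^T)^+ C)^+ = C^+ QQ^T$ and then finishing with the $AB$/$BA$ trick to reach $Q^T C^+ Q$ and hence $N_Q(\xi)$. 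You instead fix an orthonormal basis $U$ of $\E$, factor $Q = UM$ and $C = UGU^T$, and collapse both matrices to the same $r \times r$ core $(MM^T)^{-1}G$, whose eigenvalues are positive since it is similar to the positive definite matrix $(MM^T)^{-1/2}G(MM^T)^{-1/2}$. Your route is more elementary and self-contained: it needs only the $AB$/$BA$ fact, spectral decompositions, and the easily checked identity $(UBU^T)^+ = UB^{-1}U^T$ for invertible symmetric $B$, with no reverse-order law at all; it also makes explicit that both sides have exactly $r$ non-zero eigenvalues, a count the paper leaves implicit. What the paper's basis-free pseudoinverse technique buys is reusability: the identical Greville argument is applied again, essentially verbatim, in the proof of Theorem \ref{tInverseProblem}, where the relevant matrix $P_\tau W^{-1} P_\tau$ is not handed to you as $QQ^T$ for an explicit coefficient matrix. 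The delicate points you flagged are the right ones and all hold: $Q^T C^-(\xi) Q$ is invariant to the choice of generalized inverse because $\C(Q) \subseteq \C(C(\xi))$, and the Moore--Penrose inverse of the symmetric matrix $M^T G^{-1} M$ reciprocates exactly its non-zero eigenvalues with multiplicities. Note only that your step $\C(C(\xi)) = \E$ rests on the paper's standing assumption that all competing designs share the estimation space; the paper's own proof invokes the same assumption, so this is not a gap.
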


\begin{proof}
	Let us denote $C:= C(\xi)$. In this proof, we will employ the facts that the non-zero eigenvalues of $AB$ are identical to the eigenvalues of $BA$ if $BA$ is defined, and that the non-zero eigenvalues of $AA^T$ are identical to the eigenvalues of $A^T A$, see 6.54 by \cite{Seber}.
	
	The eigenvalues of $C_{W_Q}(\xi)$ are identical to the eigenvalues of $W_Q^{-1} C$, which can be expressed as $(I-P_\tau + (QQ^T)^+ ) C = (QQ^T)^+ C$, where the equality follows from $\C(C) \subseteq \E$. Therefore, the non-zero eigenvalues of $C_{W_Q}(\xi)$ are inverse to the eigenvalues of $((QQ^T)^+ C)^+$.
	
	Theorem 2 of \cite{Greville66} implies that for two symmetric matrices $A$ and $B$ of the same size, the formula $(AB)^+ = B^+ A^+$ holds if and only if $A^+A B B^T$ and $A^T A B B^+$ are symmetric. Because $r=\dim(\E)$ and $\C(Q) \subseteq \C(C) = \E$, it follows that $\C(Q) = \C(C) = \E$. Moreover, $\C((QQ^T)^+) = \C(QQ^T) = \C(Q) = \C(C) = \C(C^+)$, which yields that there exist matrices $Z$ and $V$, such that $C=QQ^TZ$ and $(QQ^T)^+ = C^+ V$. Since both $(QQ^T)^+$ and $C^+$ are symmetric, it follows that $(QQ^T)^+  = V^T C^+$. By setting $A= (QQ^T)^+$ and $B=C$ in Theorem 2 of \cite{Greville66}, we obtain 
	$$A^+A B B^T = QQ^T (QQ^T)^+ C C^T =   QQ^T (QQ^T)^+ QQ^T Z C^T = C C^T$$
	and
	$$A^T A B B^+ = (QQ^T)^+ (QQ^T)^+ C C^+ = (QQ^T)^+ V^T C^+ C C^+ = (QQ^T)^+ (QQ^T)^+, $$
	which are both evidently symmetric. Therefore, the non-zero eigenvalues of $C_{W_Q}(\xi)$ are inverse to the eigenvalues of $C^+ QQ^T$, which has the same non-zero eigenvalues as $Q^T C^+ Q$. Thus, the matrix $C_{W_Q}(\xi)$ has the same non-zero eigenvalues as $(Q^T C^+ Q)^+ = N_Q(\xi)$.
\end{proof}

\begin{figure}[h]
	\begin{center}
	\includegraphics[trim=5.3cm 16.5cm 6.0cm 4.3cm, clip]{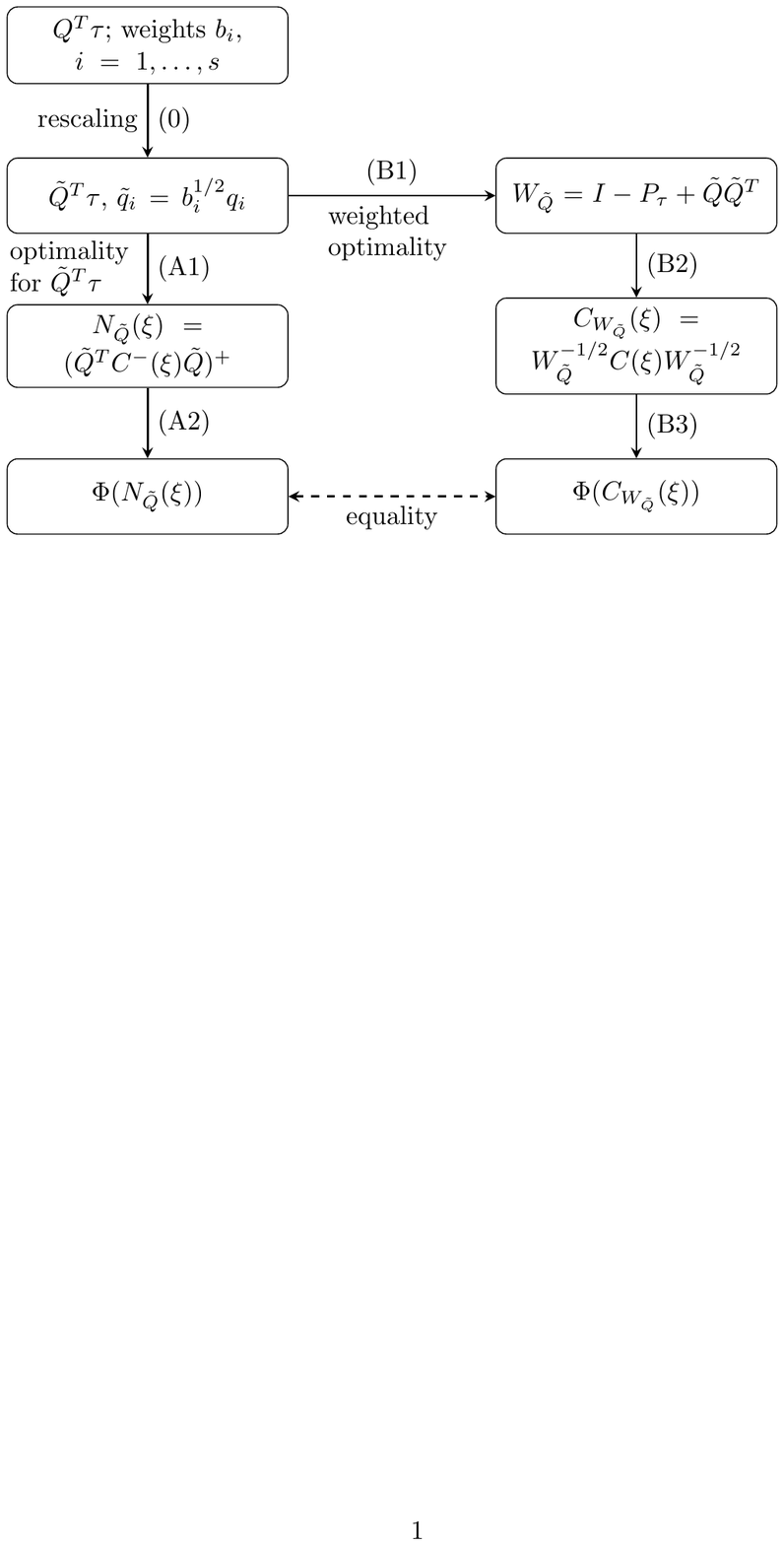}
	\caption{Constructing optimality criteria. The system of estimable functions $Q^T\tau$ with weights $b_i$ is rescaled in step (0). Suppose that $\xi$ is a feasible design. The weighted optimality of \cite{StallingsMorgan} is obtained by constructing the weight matrix $W_{\tilde{Q}}$ (B1), then the weighted information matrix $C_{W_{\tilde{Q}}}$ (B2), and finally, applying the optimality criterion to the weighted information matrix (B3). Alternatively, the optimality for the given weighted system is obtained by constructing the information matrix for $\tilde{Q}^T\tau$ (A1) and applying the optimality criterion to this information matrix (A2). Theorem \ref{tEigEquiv} shows the equivalence of these two approaches, i.e., that $\Phi(N_Q(\xi))$ is equal to $\Phi(C_{\tilde{W}}(\xi))$ for any eigenvalue-based criterion $\Phi$.}\label{fEigEquiv}
	\end{center}
\end{figure}

The construction of and the relationship between the weighted optimality and the optimality for $Q^T\tau$ is depicted in Figure \ref{fEigEquiv}.
The condition $r=\dim(\E)$ in Theorem \ref{tEigEquiv} is necessary for the weight matrix $W_Q$ (as defined by \cite{StallingsMorgan}) to exist. Unlike the weighted information matrix $C_Q$, the information matrix for $Q^T\tau$ allows one to consider also the case in which $r<\dim(\E)$. In Sections \ref{sExtendingWeights} and \ref{sWeightsForContrasts}, we will propose a weight matrix that captures also $r < \dim(\E)$.

The coefficient matrix $Q$ in Theorem \ref{tEigEquiv} can be either scaled or unscaled. Therefore, even when considering a system of contrasts $Q^T\tau$ with non-uniform weights $b_i>0$, $i=1,\ldots,s$, by Theorem \ref{tEigEquiv}, such an objective can be captured by the information matrix for some system of estimable functions. Namely, such a weighting  can be accomplished by employing the matrix $N_{\tilde{Q}}$ for the weighted system of estimable functions $\tilde{Q}^T\tau=B^{1/2}Q^T\tau$. In other words, the weight placed on a given estimable function $q_i^T\tau$ can be expressed by appropriately rescaling the function to $b_i^{1/2}q_i^T\tau$.

An additional evidence, besides the results mentioned in Introduction, that the theory of optimality for $Q^T\tau$ is well developed is, e.g., the fact that Theorem 2 by \cite{StallingsMorgan} is a corollary of Corollary 8.8 by \cite{puk} in light of Theorem \ref{tEigEquiv}. Moreover, the optimality for $Q^T\tau$ eliminates the need for the intermediary step of constructing the weight matrix $W_Q$ from a given system of estimable functions; instead, the matrix $N_Q(\xi)$ allows one to consider the system of interest $Q^T\tau$ directly, see Figure \ref{fEigEquiv}. 
\bigskip

We have obtained that when the experimental objective is to estimate a system of estimable functions $Q^T\tau$ with given weights, then such an objective can be equivalently expressed directly by the information matrix for $Q^T\tau$, instead of constructing the weight matrix and then considering the weighted optimality. However, one may specify the weight matrix (as in \cite{MorganWang10}), instead of considering a set of weighted estimable functions. This brings up an inverse problem: can a system of estimable functions that corresponds to the pre-specified weight matrix be constructed? We provide a positive answer to this question. First, let us define the $v \times v$ matrix $R:=(P_\tau W^{-1} P_\tau)^{+1/2}$. Such a matrix satisfies $\C(R) = \E$, $RR^T = R^2 = (P_\tau W^{-1} P_\tau)^+$, and $R^T\tau=R\tau$ is the sought system of estimable functions.

\begin{theorem}\label{tInverseProblem}
	Let $W \in \S^v_{++}$ be a weight matrix, let $R=(P_\tau W^{-1} P_\tau)^{+1/2}$. Then, given a design $\xi$ that is feasible for $R\tau$, the weighted information matrix $C_W(\xi)$ has the same eigenvalues (including multiplicities) as the information matrix $N_{R}(\xi)$ for $R\tau$.
\end{theorem}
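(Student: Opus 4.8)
The plan is to reduce the statement to Theorem \ref{tEigEquiv} by replacing the arbitrary weight matrix $W$ with the structured matrix $W_R := I - P_\tau + RR^T$ required by that theorem, and then to argue that this replacement alters neither the relevant spectrum of the weighted information matrix nor the system of interest. The conceptual content is that $W$ and $W_R$ are estimation equivalent (with constant $c=1$) in the sense of Lemma~3 of \cite{StallingsMorgan}, so that Theorem \ref{tEigEquiv}, which is stated only for weight matrices of the special form $I - P_\tau + QQ^T$, becomes applicable to the system $R^T\tau = R\tau$ with coefficient matrix $Q = R$.

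First I would record the algebraic identity linking $R$ and $W$. By definition $RR^T = R^2 = (P_\tau W^{-1} P_\tau)^+$, and since $P_\tau W^{-1} P_\tau$ is symmetric with column space $\E$, applying the pseudoinverse twice gives $(RR^T)^+ = P_\tau W^{-1} P_\tau$. Using the block structure of $W_R$ with respect to $\E \oplus \E^\perp$ (it acts as $R^2$ on $\E$ and as the identity on $\E^\perp$), one has $W_R^{-1} = I - P_\tau + (RR^T)^+$, and hence $P_\tau W_R^{-1} P_\tau = (RR^T)^+ = P_\tau W^{-1} P_\tau$, which is exactly estimation equivalence. Moreover $W_R \in \S^v_{++}$, so it is a legitimate weight matrix, and $\C(R) = \E$ yields $\mathrm{rank}(R) = \dim(\E)$, so the hypotheses of Theorem \ref{tEigEquiv} hold for $Q = R$.

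Next I would compute the non-zero spectrum of $C_W(\xi)$ directly. Since the non-zero eigenvalues of $AB$ coincide with those of $BA$, the eigenvalues of $C_W(\xi) = W^{-1/2} C W^{-1/2}$ agree up to zeros with those of $W^{-1} C$; then, exploiting $C = P_\tau C = C P_\tau$ (which holds because $\C(C) = \E$), one passes from $W^{-1} C$ to $P_\tau W^{-1} P_\tau C$, and by the identity above this equals $(RR^T)^+ C$. This is precisely the matrix occurring in the proof of Theorem \ref{tEigEquiv}, because $W_R^{-1} C = (I - P_\tau + (RR^T)^+) C = (RR^T)^+ C$. Consequently $C_W(\xi)$ and $C_{W_R}(\xi)$ have the same non-zero eigenvalues including multiplicities, and Theorem \ref{tEigEquiv} applied with $Q = R$ identifies these with the non-zero eigenvalues of $N_R(\xi)$.

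Finally I would match the zero eigenvalues to upgrade the conclusion from the non-zero spectrum to the full spectrum. As $W^{-1/2}$ is nonsingular, $C_W(\xi)$ has rank $\dim(\E)$; and $N_R(\xi)$, sharing its non-zero eigenvalues with $C_{W_R}(\xi)$, likewise has exactly $\dim(\E)$ non-zero eigenvalues. Both $C_W(\xi)$ and $N_R(\xi)$ are $v \times v$, so both carry exactly $v - \dim(\E)$ zero eigenvalues, giving agreement of all eigenvalues including multiplicities. The step I expect to be the main obstacle is the passage from $W$ to the estimation-equivalent $W_R$: because $C_W$ and $C_{W_R}$ are assembled from different, non-commuting square roots $W^{-1/2}$ and $W_R^{-1/2}$, the equality of their non-zero spectra is not visible at the level of the square roots. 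The resolution is that the spectrum depends on $W^{-1}$ only through the sandwiched quantity $P_\tau W^{-1} P_\tau$, which is invariant under the replacement; isolating this invariant via the $C = P_\tau C$ reductions is the crux of the argument.
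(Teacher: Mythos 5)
Your proof is correct, but it reaches the conclusion by a genuinely different route than the paper. The paper's proof of Theorem \ref{tInverseProblem} is self-contained: after the same initial reduction that you make (non-zero eigenvalues of $C_W(\xi)$ equal those of $W^{-1}C = W^{-1}P_\tau C P_\tau$, hence those of $W_2 C$ with $W_2 := P_\tau W^{-1}P_\tau$), it applies the pseudo-inverse reversal criterion of \cite{Greville66} a second time, directly to $A = W_2$ and $B = C$, obtaining $(W_2C)^+ = C^+W_2^+ = C^+RR$; the non-zero eigenvalues of $C^+RR$ are those of $RC^+R$, which are inverse to those of $N_R(\xi) = (RC^+R)^+$. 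You instead avoid any further use of Greville's theorem: you recognize $W_2 C = (RR^T)^+C$ as $W_R^{-1}C$ for the Stallings--Morgan-form matrix $W_R = I - P_\tau + RR^T$, verify the hypotheses of Theorem \ref{tEigEquiv} for $Q = R$ (positive definiteness of $W_R$ via the block structure on $\E \oplus \E^\perp$, $\C(R) = \E$ so $\mathrm{rank}(R) = \dim(\E)$, and feasibility by assumption), and invoke that theorem as a black box. What your reduction buys is economy and structure: it exhibits Theorem \ref{tInverseProblem} as essentially a corollary of Theorem \ref{tEigEquiv} plus the estimation equivalence $P_\tau W_R^{-1}P_\tau = P_\tau W^{-1}P_\tau$, so the pseudo-inverse machinery is exercised only once, and the role of $W_R$ as the ``canonical'' Stallings--Morgan weight matrix equivalent to $W$ is made explicit. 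What the paper's direct argument buys is that it needs no side verification that $W_R$ is a legitimate nonsingular weight matrix meeting the hypotheses of Theorem \ref{tEigEquiv}; it simply repeats the same proof technique with different inputs. Both proofs settle the multiplicity of the zero eigenvalue identically, by noting that $C_W(\xi)$ and $N_R(\xi)$ are $v \times v$ matrices sharing $\dim(\E)$ non-zero eigenvalues.
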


\begin{proof} 
	In this proof, we employ the same techniques as in the proof of Theorem \ref{tEigEquiv}. Let us denote $C:= C(\xi)$ and $W_2:= P_\tau W^{-1} P_\tau$.
	The eigenvalues of $C_{W}(\xi)$ are identical to the eigenvalues of 
	$W^{-1} C = W^{-1} P_\tau C P_\tau$, which are in turn identical to the eigenvalues of $P_\tau W^{-1} P_\tau C = W_2 C$. The non-zero eigenvalues of $W_2 C$ are inverse to the non-zero eigenvalues of $(W_2 C)^+$.
	
	Note that $\C(W_2^+) = \C(W_2) = \E = \C(C) = \C(C^+)$. Therefore, there exist some matrices $Z$ and $H$, such that $W_2 = C^+ Z$ and $C = W_2^+H$. Transposing the former equation yields $W_2 = Z^T C^+$.
	Let us employ Theorem 2 by \cite{Greville66} for $A=W_2$ and $B=C$. Then, 
	$$A^T A B B^+ = W_2 W_2 C C^+ = W_2 Z^T C^+ C C^+ = W_2 Z^T C^+ = W_2^2$$
	and
	$$A^+ A B B^T = W_2^+ W_2 C C =  W_2^+ W_2 W_2^+ H C = W_2^+ H C = C^2,$$
	which are both symmetric. Thus, $(W_2 C)^+ = C^+ W_2^+ = C^+ RR$. The eigenvalues of $C^+ RR$ are identical to the eigenvalues of $RC^+ R$, which are in turn inverse to the eigenvalues of $(RC^+ R)^+ = N_R(\xi)$. This shows that the non-zero eigenvalues of $C_W(\xi)$ and $N_R(\xi)$ are identical.
	Moreover, both $N_R(\xi)$ and $C_W(\xi)$ are $v \times v$ matrices, thus they even have the same multiplicity of the zero eigenvalue.
\end{proof}
\begin{figure}[h]
	\begin{center}
		\includegraphics[trim=5.3cm 18.3cm 5.5cm 4.3cm, clip]{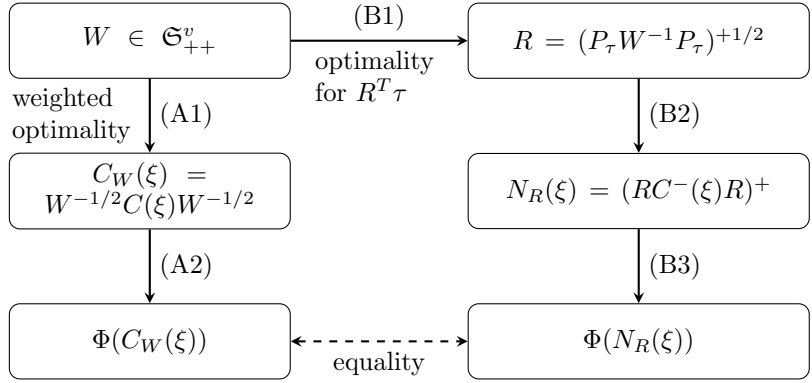}
		\caption{Constructing optimality criteria for a given weight matrix. Suppose that $\xi$ is a feasible design. The weighted optimality is obtained by constructing the weighted information matrix $C_{W}(\xi)$ (A1) and then applying the optimality criterion to the weighted information matrix (A2). Alternatively, the optimality for a system of estimable functions is obtained by constructing the sought system $R\tau$ (B1), constructing the information matrix for $R\tau$ (B2) and then applying the optimality criterion to this information matrix (B3). Theorem \ref{tInverseProblem} shows the equivalence of these two approaches, i.e., that $\Phi(C_W(\xi))$ is equal to $\Phi(N_R(\xi))$ for any eigenvalue-based criterion $\Phi$.}\label{fEigEquiv2}
	\end{center}
\end{figure}

By Theorem \ref{tInverseProblem}, weighted optimality with respect to any eigenvalue-based criterion can be transformed to optimality for $R\tau$; and thus, already developed algorithms for optimality for estimating a system of estimable functions can be employed. Figure \ref{fEigEquiv2} illustrates the construction of the weighted optimality criterion for a given $W$ and of the optimality criterion for the corresponding system $R\tau$.

\section{Extending the definition of the weight matrix}\label{sExtendingWeights}

When $\dim(\E)<v$, one needs not weight all the functions with coefficients in $\R^v$; e.g., in a model with treatment effects and nuisance effects, there is no point in weighting functions other than the treatment contrasts. Moreover, the experimenters may wish to assign non-zero weights only to some subset of the estimable functions. Then, we propose to express the set of all coefficient vectors of functions that are to be weighted using $\C(W)$. In particular, we require only for the functions $q^T\tau$ with $q \in \C(W)$ to be weighted.

Therefore, we propose that the weight matrices need not be non-singular; in fact, since $\C(W)$ represents the set of functions that are to be weighted, ranks of the proposed weight matrices are equal to the dimensions of the sets of the considered coefficient vectors $q$. For example, in a model with treatment and nuisance effects, it always holds that $\C(W) \subseteq 1_v^\perp$ and thus the proposed weight matrix satisfies $\mathrm{rank}(W) \leq v-1$ in such a model. Clearly, for singular $W$, the assigned weights cannot be given by $(q^T W^{-1} q)^{-1}$. A natural relaxation of this formula is $(q^T W^- q)^{-1}$. Then, given a weight matrix $W$ and a coefficient vector $q \in \C(W)$, we define the weight of $q^T\tau$ as $(q^T W^- q)^{-1}$; if $q \not \in \C(W)$, the weight of $q^T\tau$ is zero. 

\begin{definition}\label{dWeightMatrix}
	(i) The matrix $W$ is a weight matrix if  $W \in \S^v_+$ and $\C(W) \subseteq \E$. 
	(ii) Let $W$ be a weight matrix and let $q \in \C(W)$. Then, the weighted variance of $q^T\tau$ is 
	$$\Var_W(\widehat{q^T\tau}) = (q^T W^- q)^{-1} \Var(\widehat{q^T\tau}).$$
\end{definition}


Note that the condition $q \in \C(W)$ ensures that the weight of $q^T\tau$ does not depend on the choice of the generalized inverse $W^-$. Moreover, if $q \in \C(W)$, then $q=Wh$ for some $h \in \R^v$; thus $q^T W^- q = h^T W h$, which is equal to zero if and only if $h \in \N(W)$, i.e., when $q=Wh=0_v$. It follows that $W$ places zero weight on an estimable function that satisfies $q \in \C(W)$ if and only if $q=0_v$.

Following Definition \ref{dWeightMatrix}, we must appropriately define the weighted information matrix, as the definition $C_W(\xi) = W^{-1/2} C(\xi) W^{-1/2}$ is not feasible for singular $W$. Given a weight matrix $W$, we denote its spectral decomposition by $W=S_W\Lambda_W S_W^T$, where $\Lambda_W=\diag(\lambda_1(W), \ldots, \lambda_v(W))$ and $S_W$ is a $v \times v$ orthogonal matrix. We denote the rank of $W$ by $d$. Then, if $d<v$, we may also write $W=F D F^T$, where $D=\diag(\lambda_1(W), \ldots, \lambda_d(W))$ is a $d \times d$ matrix of the positive eigenvalues of $W$, and the $v \times d$ matrix $F$ contains the first $d$ columns of $S_W$. Then, $W$ can be expressed as $W=K_W K_W^T$, where $K_W=FD^{1/2}$ is a $v \times d$ matrix of rank $d$.

\begin{definition}\label{dWeightedInformation}
	(i) Let $W$ be a weight matrix and let $\xi$ be a design that satisfies $\C(W) \subseteq \C(C(\xi))$. Then, the weighted information matrix of $\xi$ is
	$$C_W(\xi) = (K_W^T C^-(\xi) K_W)^{-1},$$
	where $K_W$ is defined in the previous paragraph.
	
	(ii) Let $\Phi$ be an optimality criterion. Then, a design is $\Phi$-optimal with respect to $W$, or $\Phi_W$-optimal in short, if it maximizes $\Phi(C_W(\xi))$.
\end{definition}

The condition  $\C(W) \subseteq \C(C(\xi))$ ensures that the functions with coefficient vectors $q \in \C(W)$ that are to be weighted are actually estimable under $\xi$, i.e., $q \in \C(C(\xi))$.
The $v \times d$ matrix $K_W$ in Definition \ref{dWeightedInformation} satisfies $K_WK_W^T = W$, which indicates that it is a quasi-square root of $W$, with the number of columns adjusted to correspond to its rank; compare to $C_W(\xi) = W^{-1/2} C(\xi) W^{-1/2}$ given by \cite{StallingsMorgan}. Note that both the matrices $K_W$ and $C(\xi)$ are of rank $d$, and $\C(K_W) \subseteq \C(C(\xi))$; thus, the weighted information matrix $C_W(\xi)$ is well-defined, and is of full rank.

We remark that the weighted information matrix $C_W(\xi)$ coincides with the information matrix $N_K(\xi)$ for estimating $K_W^T\tau$. Therefore, Definition \ref{dWeightedInformation} in fact consists of finding a set of estimable functions of full rank corresponding to $W$ and then calculating the information matrix for the obtained set.
\bigskip

Similarly to \cite{StallingsMorgan}, we show that the proposed weighted information matrix is relevant with respect to the weighted variances.

\begin{proposition}\label{pEigenVar}
	Let $W$ be a weight matrix of rank $d$, let $q \in \C(W)$ and let $\xi$ be a design that is feasible for $q^T\tau$. Then, the weighted variance of $\widehat{q^T\tau}$ under $\xi$ is a convex combination of $\lambda_1^{-1}(C_W(\xi)), \ldots, \lambda_d^{-1}(C_W(\xi))$.
\end{proposition}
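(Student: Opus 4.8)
The plan is to reduce the weighted variance of $\widehat{q^T\tau}$ to a Rayleigh quotient of the $d \times d$ matrix $M := K_W^T C^-(\xi) K_W$, whose inverse is $C_W(\xi)$ by Definition \ref{dWeightedInformation}, and then to invoke the elementary fact that every Rayleigh quotient is a convex combination of the eigenvalues of the underlying matrix. The main conceptual step is recognizing that the weighted variance is exactly this quotient; once that is seen, the statement is essentially immediate.

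First I would set $C := C(\xi)$ and observe that $\C(W) = \C(K_W K_W^T) = \C(K_W)$, so that, since $K_W$ has full column rank $d$, the hypothesis $q \in \C(W)$ lets me write $q = K_W a$ for a unique $a \in \R^d$. I would then treat the two factors of $\Var_W(\widehat{q^T\tau}) = (q^T W^- q)^{-1}\,\Var(\widehat{q^T\tau})$ (Definition \ref{dWeightMatrix}) separately. For the weight, because $q \in \C(W)$ the scalar $q^T W^- q$ is independent of the chosen generalized inverse, so I may evaluate it using $W^+$; from the full-column-rank factorization $W = K_W K_W^T$ one gets $K_W^T W^+ K_W = I_d$, whence $q^T W^- q = a^T a = \lVert a \rVert^2$. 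For the ordinary variance I would use the standard formula $\Var(\widehat{q^T\tau}) = q^T C^- q$ (again independent of the generalized inverse, as $q$ is estimable), and substituting $q = K_W a$ gives $q^T C^- q = a^T M a$.

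Combining these two computations yields $\Var_W(\widehat{q^T\tau}) = a^T M a / a^T a$, the Rayleigh quotient of $M$ at $a$. Since $K_W$ has full column rank and $\C(K_W) \subseteq \C(C)$, the matrix $M$ is symmetric positive definite of order $d$, so $C_W(\xi) = M^{-1}$ is well-defined and the eigenvalues of $M$ are precisely $\lambda_1^{-1}(C_W(\xi)), \ldots, \lambda_d^{-1}(C_W(\xi))$. Taking a spectral decomposition $M = \sum_{i=1}^{d} \mu_i u_i u_i^T$ with orthonormal $u_i$, I would rewrite the quotient as $\sum_{i=1}^{d} \mu_i (u_i^T a)^2 / \lVert a \rVert^2$; the coefficients are non-negative and, because the $u_i$ form an orthonormal basis, sum to $\lVert a \rVert^2 / \lVert a \rVert^2 = 1$. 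This exhibits $\Var_W(\widehat{q^T\tau})$ as a convex combination of the $\lambda_i^{-1}(C_W(\xi))$, as required.

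The only genuinely delicate point I anticipate is the identity $q^T W^- q = \lVert a \rVert^2$: one must first justify that $q^T W^- q$ does not depend on the generalized inverse (which follows from $q \in \C(W)$) and then verify $K_W^T W^+ K_W = I_d$ from $W = K_W K_W^T$ with $K_W$ of full column rank. Everything else — the variance formula $q^T C^- q$, the positive definiteness of $M$, and the convex-combination property of the Rayleigh quotient — is routine, and the overall structure parallels Lemma 1 of \cite{StallingsMorgan}, which handled the non-singular case.
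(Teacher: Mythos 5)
Your proof is correct and follows essentially the same route as the paper's: both write $q = K_W h$ using $\C(W)=\C(K_W)$, reduce the weight to $q^T W^- q = h^T h$ via $K_W^T (K_WK_W^T)^- K_W = I_d$, and express the variance through the spectral decomposition of $C_W(\xi)$ (your Rayleigh-quotient phrasing of $M = C_W^{-1}(\xi)$ is the identical computation). The only cosmetic difference is that you justify the weight identity by passing to the Moore--Penrose inverse $W^+$, while the paper argues that $K_W^T(K_WK_W^T)^-K_W$ is symmetric idempotent of full rank; both are valid.
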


\begin{proof}
	Since $W=K_WK_W^T$, we have $\C(W) = \C(K_W)$ and thus $q\in \C(W) = \C(K_W)$ yields $q=K_Wh$ for some $h \in \R^d$. Therefore,
	$$
	\Var(\widehat{q^T\tau}) = q^T C^-(\xi) q = h^T C_W^{-1}(\xi) h = h^T U \Lambda^{-1} U^T h = \sum_{i=1}^d g_i^2\lambda_i^{-1}(C_W(\xi)),
	$$
	where $U \Lambda U^T$ is the spectral decomposition of $C_W(\xi)$, and $g:= U^T h$. Then, $\sum_i g_i^2 = h^T UU^T h = h^T h$. The weight of $q^T\tau$ is $q^T W^- q = h^T K_W^T (K_WK_W^T)^- K_W h = h^T h$, because $K_W^T (K_WK_W^T)^- K$ is a symmetric idempotent matrix of full rank, i.e., $I_d$. It follows that
	$$
	\Var_W(\widehat{q^T\tau}) = (h^Th)^{-1} \sum_{i=1}^r g_i^2\lambda_i^{-1}(C_W(\xi)) = \sum_{i=1}^r \frac{g_i^2}{g^Tg}\lambda_i^{-1}(C_W(\xi)).
	$$
\end{proof}

Analogously to \cite{StallingsMorgan}, we say that two weight matrices $W_1$ and $W_2$ that satisfy $\C(W_1)=\C(W_2)=:\S$ are \emph{estimation equivalent} if there exists $c > 0$ such that $q^T W_1^- q = c q^T W_2^- q$ for all $q \in \S$.

\begin{proposition}\label{pEstEquiv}
	Let $W_1$ and $W_2$ be weight matrices satisfying $\C(W_1)=\C(W_2)=:\S$. Then, $W_1$ and $W_2$ are estimation equivalent if and only if $P_\S W_1^- P_\S = c P_\S W_2^- P_\S$ for some constant $c>0$, where $P_\S$ is the orthogonal projector on $\S$.
\end{proposition}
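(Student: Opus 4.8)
The plan is to prove the two implications separately, after first recording that the matrix $P_\S W_i^- P_\S$ is meaningful, i.e. independent of the choice of generalized inverse. Indeed, for any $u,v \in \R^v$ we have $P_\S u, P_\S v \in \S = \C(W_i)$, so writing $P_\S u = W_i a$ and $P_\S v = W_i b$ gives $u^T P_\S W_i^- P_\S v = a^T W_i W_i^- W_i b = a^T W_i b$, which does not depend on $W_i^-$. In particular I may take $W_i^- = W_i^+$, so that $M_i := P_\S W_i^- P_\S = P_\S W_i^+ P_\S$ is symmetric. I would also note at the outset the two structural facts $\C(M_i) \subseteq \S$ and $M_i u = 0_v$ for every $u \in \S^\perp$ (since then $P_\S u = 0_v$), as these are what make the final step work.

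The implication from the matrix identity to estimation equivalence is immediate and I would dispatch it first: if $M_1 = c M_2$, then for any $q \in \S$ we have $q = P_\S q$ and hence $q^T W_1^- q = q^T M_1 q = c\, q^T M_2 q = c\, q^T W_2^- q$, so $W_1$ and $W_2$ are estimation equivalent.

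For the converse, I would suppose $q^T W_1^- q = c\, q^T W_2^- q$ for all $q \in \S$ and set $N := M_1 - c M_2$, so that the hypothesis reads $q^T N q = 0$ for every $q \in \S$. Here $N$ is symmetric, satisfies $\C(N) \subseteq \S$, and vanishes on $\S^\perp$. The hard part is upgrading the vanishing of the quadratic form on $\S$ to the vanishing of the full bilinear form on $\S$: by polarization, for $p,q \in \S$ the identity $(p+q)^T N (p+q) = 0$ combined with $p^T N p = q^T N q = 0$ and the symmetry of $N$ forces $p^T N q = 0$. To finish I would decompose an arbitrary $x \in \R^v$ as $x = P_\S x + (I-P_\S)x$; since $N$ annihilates $\S^\perp$ and $\C(N) \subseteq \S$, for any $y \in \R^v$ we obtain $y^T N x = (P_\S y)^T N (P_\S x) = 0$, whence $N = 0_{v \times v}$, that is, $P_\S W_1^- P_\S = c P_\S W_2^- P_\S$.

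I expect the main obstacle to be purely the polarization step together with the bookkeeping that $N$ both maps into $\S$ and kills $\S^\perp$; once those column-space and null-space facts are in place, the passage from ``equal quadratic forms on $\S$'' to ``equal matrices after projection by $P_\S$'' is forced. The argument parallels Lemma 3 of \cite{StallingsMorgan}, the one genuine difference being that singularity of the weight matrices requires working with $P_\S W_i^- P_\S$ (equivalently $W_i^+$) in place of $P_\tau W_i^{-1} P_\tau$, so that verifying independence of the generalized inverse becomes a necessary preliminary rather than an automatic one.
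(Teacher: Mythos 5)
Your proof is correct and takes essentially the same route as the paper's: the easy direction uses $P_\S q = q$ for $q \in \S$, and the hard direction is the same polarization argument, upgrading equality of the quadratic forms on $\S$ to equality of the projected matrices. The differences are only presentational — the paper polarizes entry-wise using the columns of $P_\S$ (so the matrix identity is read off directly), whereas you work coordinate-free with $N = M_1 - cM_2$ and fold in the well-definedness and symmetry of $P_\S W_i^- P_\S$, points the paper records only in the remark following the proposition.
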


\begin{proof}
	The proof is analogous to the proof of Lemma 3 by \cite{StallingsMorgan}. Let $W_1$ and $W_2$ be estimation equivalent. Let us denote $q_i$ as $i$-th column of $P_\S$. Then, $q_i \in \S$ and thus $q_i^T W_1^- q_i = c q_i^T W_2^- q_i$, which shows that the diagonal elements of $P_\S W_k^- P_\S$ satisfy the desired equation. Now, let $q_j$ be the $j$-th column of $P_\S$, $j \neq i$, and let $q:=q_i+q_j \in \S$. Then, $q^T W_1^-q = c q^T W_2^- q$, which yields 
	$$q_i^T W_1^-q_i + q_j^T W_1^-q_j + 2 q_i^T W_1^-q_j = 
	  cq_i^T W_2^-q_i + cq_j^T W_2^-q_j + 2c q_i^T W_2^-q_j,$$
	and thus $q_i^T W_1^- q_j = c q_i^T W_2^- q_j$, which shows the desired result also for non-diagonal elements.
	
	Conversely, let the weight matrices satisfy $P_\S W_1^- P_\S = P_\S W_2^- P_\S$ and let $q \in \S$. Then, $P_\S q = q$ and thus $q^T W_1^- q = q^T P_\S W_1^- P_\S q = c q^T P_\S W_2^- P_\S q = c q^T W_2^- q$.
\end{proof}

Note that because $\C(P_\S) = \C(W_1)=\C(W_2)$, the expressions $P_\S W_1^- P_\S$ and $P_\S W_2^- P_\S$ do not depend on the choice of $W_1^-$ and $W_2^-$, respectively. Furthermore, if $\mathrm{rank}(W_i) = \dim(\E)$, $i=1,2$, then we have $\S=\E$ and thus $P_\S W_1^- P_\S = c P_\S W_2^- P_\S$ becomes $P_\tau W_1^- P_\tau = c P_\tau W_2^- P_\tau$, cf. Lemma 3 by \cite{StallingsMorgan}.

\section{Weights for any set of estimable functions}\label{sWeightsForContrasts}

\subsection{System of a lesser dimension}

Consider the case of $r<\dim(\E)$, i.e., the case in which the coefficient vectors $q_1, \ldots, q_s$ do not span the entire estimation space. The simplest example of such a system of contrasts is a single estimable function $Q^T\tau=q_1^T\tau$, which represents $c$-optimality (where $c=q_1$). As already noted, the experimental settings satisfying $r<\dim(\E)$ are not covered by \cite{StallingsMorgan}. In general, the matrix $W_Q = I-P_\tau + QQ^T$ is not of full rank in such a case; and thus $W_Q^{-1}$, which is needed for the actual weighting, does not exist.

The approach of weighting \emph{any} estimable function $q^T\tau$ by $(q^T W^{-1} q)^{-1}$ may not be necessary if $r<\dim(\E)$, as shown in the following example, which aims to demonstrate this in a simple setting.

\begin{example}
	Consider a simple case of model \eqref{eModel1}, in which $\tau$ is the vector of treatment effects and $\beta$ represents only the constant term, resulting in:
	\begin{equation}\label{eModelTreat}
	y_i = \mu + \tau_{\xi(i)} + \varepsilon_i, \quad i=1,\ldots,n,
	\end{equation}
	where $\mu$ is the overall mean,  and $\xi(i) \in \{1,\ldots,v\}$ is the treatment chosen for the $i$-th trial. The estimation space is $\E = 1_v^\perp$, which represents the set of all treatment contrasts $q^T \tau$.
	
	Suppose that $v=4$ and that we aim to compare only $\tau_2$ and $\tau_1$; thus, we have $Q^T\tau = q_1^T\tau = (\tau_2 - \tau_1)/\sqrt{2}$. Clearly, such a system of contrasts does not place any weight on $\tau_3$ or $\tau_4$. Therefore, any  matrix of treatment weights corresponding to $Q^T\tau$ should not place any weight on any $q^T\tau$, such that $q_3 \neq 0$ or $q_4 \neq 0$.
	
	Now, consider a less extreme case. Suppose that $Q = (q_1, q_2)$, where $q_2^T = (1,1,1,-3)/\sqrt{12}$, and let $q_3^T = (1,1,-2,0)/\sqrt{6}$. Then, the coefficient vector $q_3$ cannot be constructed from $Q$, i.e., $q_3$ is not a linear combination of $q_1$ and $q_2$; in fact, $q_3$ is orthogonal to both of them. Then, $q_3^T\tau$ need not inherit any of the weight assigned to $q_1^T\tau$ and $q_2^T\tau$; e.g., unlike $(\tau_3 - \tau_4)/\sqrt{2}$, which can clearly be constructed as a linear combination of $q_1^T\tau$ and $q_2^T\tau$. \hfill\textbardbl
\end{example}

For any system $Q^T\tau$, we propose to weight only the estimable functions that can be constructed from the functions of primary interest. Formally, one weights only $q^T\tau$, such that $q$ can be expressed as a linear combination of functions of primary interest, i.e., only if $q \in \C(Q)$. We note that the condition $q \in \C(Q)$ covers the original condition $q \in \E$, because $\C(Q) \subseteq \E$ since the original functions in $Q^T\tau$ need to be estimable too. In fact, the condition  $q \in \C(Q)$ reduces to $q \in \E$ when $r=\dim(\E)$, because $\C(Q)=\E$ then. That is a rather meaningful condition, as there is no point in assigning weights to non-estimable functions.

\subsection{Corresponding weight matrix}

The definition of the weight matrix presented in Section \ref{sExtendingWeights} allows us to provide the following weight matrix corresponding to any system of interest $Q^T\tau$:
For an arbitrary system of estimable functions $Q^T\tau$ (i.e., for any number of functions $s \geq 1$ of any achievable rank $r \leq \min\{s,\dim(\E)\}$), we define the unscaled weight matrix $W_Q= QQ^T$. If the particular functions of interest are assigned weights $b_1, \ldots, b_s$, then we simply consider the scaled system of functions of interest $\tilde{Q}^T\tau=B^{1/2}Q^T\tau$, resulting in $W_{\tilde{Q}}=QBQ^T$.

\begin{definition}
	Let $Q^T\tau$ be a system of estimable functions with weights $b_1, \ldots, b_s$. Then, the corresponding unscaled weight matrix is $W_Q=QQ^T$ and the scaled weight matrix is $W_{\tilde{Q}}=QBQ^T$, where $\tilde{Q}=QB^{1/2}$.
\end{definition}

When $r=\dim(\E)$, the term $I-P_\tau$ ensures the non-singularity of the weight matrix $I-P_\tau + QQ^T$. However, the relaxation of the non-singularity condition of the weight matrices in this paper eliminates the need for the 'regularizing term' $I-P_\tau$. Therefore, we obtain the simple form $W_Q = QQ^T$. In particular, when $\E = \R^v$ and $r=v$, then $P_\tau = I$ and thus the matrices $I-P_\tau + QQ^T$ and $QQ^T$ coincide. On the other hand, when $\E = 1_v^\perp$ and $r=v-1$, the matrices do not coincide, because the weight matrix of \cite{StallingsMorgan} is non-singular, whereas the weight matrix defined here is of rank $v-1$, as the set of estimable functions has only $v-1$ 'degrees of freedom'.

The following proposition shows that the weight matrix proposed here is equivalent to the weight matrix considered by \cite{StallingsMorgan}, with respect to the implied weights of the estimable functions, when the latter is defined (i.e., when $r=\dim(\E)$). Therefore, we obtain a weight matrix $W=QQ^T$, which is equivalent to the weight matrix $W=I-P_\tau + QQ^T$, but is of a simpler form.

\begin{proposition}
	Let $Q^T\tau$ be a system of functions of interest, such that $r=\dim(\E)$. Then, the weight matrices $W_1 = QQ^T$ and $W_2 = (I-P_\tau) + QQ^T$ are estimation equivalent. That is, $q^T W_1^- q = q^T W_2^{-1} q$ for any $q \in \E$. 
\end{proposition}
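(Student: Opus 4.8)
The plan is to reduce both quadratic forms $q^T W_1^- q$ and $q^T W_2^{-1} q$ to the single expression $q^T (QQ^T)^+ q$ for every $q \in \E$, and then read off estimation equivalence with constant $c = 1$. First I would record the structural fact that drives everything: since $\mathrm{rank}(Q) = r = \dim(\E)$ and $\C(Q) \subseteq \E$, the column space of $Q$ fills out the whole estimation space, i.e.\ $\C(Q) = \E$, and hence $\C(W_1) = \C(QQ^T) = \C(Q) = \E$. In particular $P_\tau$ is the orthogonal projector onto $\C(W_1)$, and because $QQ^T$ is symmetric we also have $(QQ^T)(QQ^T)^+ = P_\tau$ together with $P_\tau (QQ^T)^+ = (QQ^T)^+ = (QQ^T)^+ P_\tau$.

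Next I would handle $W_2$. As already noted in Section \ref{ssWeightedOpt}, the matrix $W_2 = (I - P_\tau) + QQ^T$ is nonsingular with inverse $W_2^{-1} = (I - P_\tau) + (QQ^T)^+$; this is exactly the statement that $I - P_\tau$ and $QQ^T$ operate on the orthogonally complementary subspaces $\E^\perp$ and $\E$, so $W_2$ is block diagonal, acting as the identity on $\E^\perp$ and as $QQ^T$ restricted to $\E$. (If one prefers not to quote this, a one-line check using the identities of the previous paragraph gives $[(I-P_\tau)+(QQ^T)^+]W_2 = (I-P_\tau) + P_\tau = I_v$.) For $q \in \E$ we have $(I - P_\tau) q = 0_v$, so the $\E^\perp$-block contributes nothing, and therefore $q^T W_2^{-1} q = q^T (QQ^T)^+ q$.

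For $W_1$ I would invoke the observation made immediately after Definition \ref{dWeightMatrix}: for $q \in \C(W_1)$ the scalar $q^T W_1^- q$ does not depend on the choice of the generalized inverse $W_1^-$. Since $(QQ^T)^+$ is one such generalized inverse of $W_1 = QQ^T$, and every $q \in \E = \C(W_1)$ lies in its column space, we obtain $q^T W_1^- q = q^T (QQ^T)^+ q$. Combining the two displays yields $q^T W_1^- q = q^T W_2^{-1} q$ for all $q \in \E$, which is precisely estimation equivalence with $c = 1$.

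I do not anticipate a genuine obstacle here; the only points requiring care are the form of $W_2^{-1}$ and the independence of $q^T W_1^- q$ from the chosen generalized inverse, and both are supplied earlier in the paper. The single subtlety worth flagging explicitly is that the argument leans entirely on $\C(Q) = \E$ (equivalently $r = \dim(\E)$): this is what makes $(I-P_\tau)q = 0$ annihilate the regularizing term for $q \in \E$ and simultaneously makes $(QQ^T)(QQ^T)^+ = P_\tau$, so that the two weight matrices cannot differ on any weighted function. Without this rank hypothesis the term $I - P_\tau$ would genuinely alter the weights, which is why the comparison is stated only in the regime where $W_2$ of \cite{StallingsMorgan} is defined.
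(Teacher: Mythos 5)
Your proof is correct, and its computational core coincides with the paper's: both arguments reduce $q^T W_2^{-1} q$ and $q^T W_1^- q$ to the common value $q^T (QQ^T)^+ q$, using the explicit inverse $W_2^{-1} = (I-P_\tau) + (QQ^T)^+$ and the fact that $q^T W_1^- q$ is invariant over choices of generalized inverse when $q \in \C(W_1)$. The packaging differs, though. The paper sandwiches both matrices by $P_\tau$, establishes $P_\tau W_2^{-1} P_\tau = P_\tau W_1^- P_\tau$, and then invokes Proposition~\ref{pEstEquiv} to conclude estimation equivalence; you instead verify the defining identity pointwise for each $q \in \E$, in effect inlining the easy (converse) direction of Proposition~\ref{pEstEquiv} via $P_\tau q = q$. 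Your route is slightly more self-contained, and it also sidesteps a technical looseness in the paper's citation: Proposition~\ref{pEstEquiv} is stated for weight matrices satisfying $\C(W_1) = \C(W_2) = \S$, but here $W_2$ is positive definite, so $\C(W_2) = \R^v$, while $\C(W_1) = \E$ may be a proper subspace (e.g., $\E = 1_v^\perp$ in treatment models); only the implication you prove directly survives without that hypothesis, and it is exactly the one the paper needs. Your one-line verification that $\left[(I-P_\tau)+(QQ^T)^+\right] W_2 = I_v$, resting on $\C(QQ^T) = \C\bigl((QQ^T)^+\bigr) = \E$, is likewise a sound substitute for the paper's appeal to \cite{StallingsMorgan} for the form of $W_2^{-1}$.
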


\begin{proof}
	Recall that $W_2^{-1} = (I-P_\tau) + (QQ^T)^+$.
	Since $r=\dim(\E)$, the matrix $P_\S$ in Proposition \ref{pEstEquiv} becomes $P_\tau$ and thus 
	$$P_\tau W_2^- P_\tau = P_\tau (QQ^T)^+ P_\tau =  P_\tau W_1^- P_\tau,$$
	where the first equality follows from $W_2^- =W_2^{-1}$; and the expression $P_\tau (QQ^T)^- P_\tau$ does not depend on the choice of $(QQ^T)^-$, yielding the last equality. Then, the estimation equivalence follows from Proposition \ref{pEstEquiv}.
\end{proof}

Similarly as in Theorem \ref{tEigEquiv}, the weighting given by $W_Q = QQ^T$ is equivalent to directly considering the information matrix $N_Q$, for any eigenvalue-based criterion.

\begin{theorem}\label{tEigEquivQQT}
	Let $Q^T\tau$ be a system of estimable functions and let $\xi$ be a feasible design for $Q^T\tau$. Then, the information matrix $N_Q(\xi)$ for $Q^T\tau$ and the weighted information matrix $C_{W_Q}(\xi) = (K_{W_Q}^T C^-(\xi) K_{W_Q})^{-1}$ have the same non-zero eigenvalues, including multiplicities.
\end{theorem}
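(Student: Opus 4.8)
The plan is to reduce the statement to a comparison between two information matrices for systems of estimable functions, and then to exploit the fact that $K_{W_Q}$ and $Q$ span the same column space. Write $C := C(\xi)$ and $K := K_{W_Q}$. Since $d = \mathrm{rank}(W_Q) = \mathrm{rank}(QQ^T) = r$, the matrix $K$ is $v \times r$ of full column rank and satisfies $KK^T = QQ^T$. Invoking the remark following Definition \ref{dWeightedInformation} that $C_W(\xi)$ coincides with $N_{K}(\xi)$, we have $C_{W_Q}(\xi) = (K^T C^- K)^{-1}$, so the theorem amounts to showing that $N_Q(\xi) = (Q^T C^- Q)^{+}$ and $(K^T C^- K)^{-1}$ have the same non-zero eigenvalues, including multiplicities.

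The key algebraic step is a factorization of $Q$ through $K$. Because $\C(Q) = \C(QQ^T) = \C(KK^T) = \C(K)$ and $K$ has full column rank, I would set $M := (K^T K)^{-1} K^T Q$, so that $Q = KM$ with $M$ an $r \times s$ matrix of rank $r$. Substituting into $QQ^T = KK^T$ gives $K M M^T K^T = K K^T$; multiplying on the left by the left inverse $(K^T K)^{-1} K^T$ and on the right by its transpose $K (K^T K)^{-1}$ then yields $M M^T = I_r$, so that $M^T$ has orthonormal columns.

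Finally I would run the eigenvalue argument with the same tools as in the proof of Theorem \ref{tEigEquiv}. Feasibility of $\xi$ for $Q^T\tau$ gives $\C(K) = \C(Q) \subseteq \C(C)$, so $A := K^T C^- K$ is a well-defined $r \times r$ positive definite matrix (independent of the choice of $C^-$), whence $N_K(\xi) = A^{-1}$. Since $Q = KM$, we have $Q^T C^- Q = M^T A M$ and therefore $N_Q(\xi) = (M^T A M)^{+}$. By 6.54 of \cite{Seber}, the non-zero eigenvalues of $M^T A M$ coincide with those of $A M M^T = A$; as $A$ is positive definite these are precisely the $r$ eigenvalues of $A$. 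Consequently the non-zero eigenvalues of $N_Q(\xi) = (M^T A M)^{+}$ are their inverses, i.e.\ the eigenvalues of $A^{-1} = C_{W_Q}(\xi)$, matching multiplicities.

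I expect the main obstacle to be the factorization step: one must verify carefully that $K$ and $Q$ have the same column space and that $MM^T = I_r$ really follows from $KK^T = QQ^T$ together with the full column rank of $K$, and one must confirm that the generalized-inverse expressions $Q^T C^- Q$ and $K^T C^- K$ are independent of the choice of $C^-$ (guaranteed by $\C(Q), \C(K) \subseteq \C(C)$). Once the orthonormality $MM^T = I_r$ is in hand, the remaining eigenvalue bookkeeping is routine.
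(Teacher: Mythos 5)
Your proof is correct, and it takes a genuinely different route from the paper's. The paper never factorizes $Q$ through $K_{W_Q}$: it writes $Q^T C^-(\xi) Q = (C^{+1/2}Q)^T(C^{+1/2}Q)$, uses the identity on non-zero eigenvalues of $AA^T$ versus $A^TA$ to pass to $C^{+1/2}QQ^TC^{+1/2}$, substitutes $QQ^T = K_{W_Q}K_{W_Q}^T$ inside this symmetric sandwich, and swaps a second time to land on $K_{W_Q}^T C^+ K_{W_Q}$; the pivot there is the square root $C^{+1/2}$, with the swap lemma applied twice. You instead exhibit the co-isometry $M$ with $Q = K_{W_Q}M$ and $MM^T = I_r$, after which a single swap ($M^T A M$ versus $AMM^T = A$) finishes the job. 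Both arguments rest on the same lemma (6.54 by \cite{Seber}) and on the invariance of $Q^T C^- Q$ and $K_{W_Q}^T C^- K_{W_Q}$ over the choice of $C^-$, which feasibility guarantees. The paper's route is shorter and avoids rank and factorization bookkeeping; yours is more structural, in that it produces the explicit relation $N_Q(\xi) = \bigl(M^T C_{W_Q}^{-1}(\xi) M\bigr)^+$ with $M$ a partial isometry, which makes transparent \emph{why} the spectra agree, and it reuses precisely the device --- deducing $Q = K_W Z$ with $ZZ^T = I$ from $QQ^T = K_W K_W^T$ --- that the paper itself employs later in the proofs of Propositions \ref{pAoptWeighted} and \ref{pAoptWeightedOrt}. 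Your supporting steps (that $\C(Q) = \C(K_{W_Q})$, that $MM^T = I_r$ follows by cancelling the full-column-rank factor $K_{W_Q}$, and that $A$ is positive definite so that all $r$ of its eigenvalues survive as the non-zero spectrum) are all sound.
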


\begin{proof}
	The non-zero eigenvalues of the information matrix $N_Q(\xi) = (Q^T C^-(\xi) Q)^+$ are inverse to the eigenvalues of the matrix $Q^T C^-(\xi) Q$. The matrix $Q^T C^-(\xi) Q$ can be expressed as $Q^T C^-(\xi) Q = (C^{+1/2} Q)^T  (C^{+1/2} Q)$, which shows that it has the same non-zero eigenvalues as the matrix 
	$(C^{+1/2} Q) (C^{+1/2} Q)^T = C^{+1/2} QQ^T C^{+1/2}.$
	Because $QQ^T = W_Q = K_{W_Q}K_{W_Q}^T$, we have $C^{+1/2} QQ^T C^{+1/2} = C^{+1/2} K_{W_Q}K_{W_Q}^T C^{+1/2}$; and $C^{+1/2} K_{W_Q}K_{W_Q}^T C^{+1/2}$ has the same non-zero eigenvalues as $K_{W_Q}^T C^+ K_{W_Q}$. Observing that the non-zero eigenvalues of the matrix $K_{W_Q}^T C^+ K_{W_Q}$ are inverse to the non-zero eigenvalues of $C_{W_Q}(\xi)$ completes the proof.
\end{proof}

Let $W$ be a weight matrix. Then, a system of estimable functions that corresponds to $W$ can be constructed employing the matrix $W^{1/2}$, similarly to Theorem \ref{tInverseProblem}.

\begin{theorem}\label{tInverseProblem2}
Let $W$ be a weight matrix. Then, the matrix $W$ corresponds to the system of estimable functions $W^{1/2} \tau$. Moreover, given a design $\xi$ that is feasible for $W^{1/2}\tau$, the weighted information matrix $C_W(\xi)$ has the same eigenvalues (including multiplicities) as the information matrix $N_{W^{1/2}}(\xi)$ for $W^{1/2}\tau$.
\end{theorem}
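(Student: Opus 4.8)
The plan is to recognize this statement as a direct specialization of Theorem \ref{tEigEquivQQT} to the coefficient matrix $Q=W^{1/2}$, so that almost no new computation is required; the only genuine work is verifying the hypotheses and reconciling the claim about multiplicities with the fact that $C_W(\xi)$ and $N_{W^{1/2}}(\xi)$ need not have the same size.

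First I would check that $W^{1/2}\tau$ is a legitimate system of estimable functions, i.e.\ that its coefficient matrix $Q:=W^{1/2}$ satisfies $\C(Q)\subseteq\E$. Since $W\in\S^v_+$, the symmetric square root $W^{1/2}$ exists and obeys $\C(W^{1/2})=\C(W)$; combined with the weight-matrix condition $\C(W)\subseteq\E$ from Definition \ref{dWeightMatrix}, this gives $\C(Q)\subseteq\E$, as needed, and $\xi$ is feasible for $W^{1/2}\tau$ by hypothesis. Next I would establish the first assertion, that $W$ is the weight matrix corresponding to $W^{1/2}\tau$, by a one-line computation from the definition of the corresponding unscaled weight matrix: with $Q=W^{1/2}$ symmetric and $W^{1/2}W^{1/2}=W$, we obtain $W_Q=QQ^T=W^{1/2}(W^{1/2})^T=W$.

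Having identified $W_Q=W$, the eigenvalue assertion is immediate from Theorem \ref{tEigEquivQQT} applied to this $Q$: that theorem yields that $N_Q(\xi)=N_{W^{1/2}}(\xi)$ and $C_{W_Q}(\xi)=C_W(\xi)$ share the same non-zero eigenvalues, including multiplicities. The step needing care, and the only real obstacle, is the phrase ``including multiplicities,'' because the two matrices differ in size when $W$ is singular: writing $d=\mathrm{rank}(W)$, the matrix $C_W(\xi)=(K_W^TC^-(\xi)K_W)^{-1}$ is $d\times d$ and of full rank $d$, whereas $N_{W^{1/2}}(\xi)=(W^{1/2}C^-(\xi)W^{1/2})^+$ is $v\times v$ of rank $d$. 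Thus the $d$ non-zero eigenvalues coincide with their multiplicities by Theorem \ref{tEigEquivQQT}, and $N_{W^{1/2}}(\xi)$ carries an additional $v-d$ zero eigenvalues that merely reflect the rank-deficiency of $W^{1/2}$ viewed as a $v\times v$ coefficient matrix. When $W$ is nonsingular, $d=v$ and both matrices are $v\times v$ of full rank, so all eigenvalues match literally; in the singular case the equivalence is understood on the non-zero spectrum, which is exactly what any eigenvalue-based criterion depends on.

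I would close by remarking that, should one prefer a self-contained argument bypassing Theorem \ref{tEigEquivQQT}, the same conclusion follows from the cyclic-eigenvalue identity used throughout: the non-zero eigenvalues of $W^{1/2}C^-(\xi)W^{1/2}$ equal those of $C^-(\xi)W=C^-(\xi)K_WK_W^T$, and hence those of $K_W^TC^-(\xi)K_W=C_W^{-1}(\xi)$, so passing to pseudoinverses recovers the eigenvalues of $C_W(\xi)$.
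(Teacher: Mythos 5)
Your proof is correct and takes the same route as the paper's own: verify $\C(W^{1/2})=\C(W)\subseteq\E$, note that $W^{1/2}(W^{1/2})^T=W$ so that $W$ is exactly the weight matrix corresponding to the system $W^{1/2}\tau$, and specialize Theorem \ref{tEigEquivQQT} to $Q=W^{1/2}$. The one step where you diverge is the multiplicity claim, and there your treatment is in fact more careful than the paper's. The paper concludes by asserting that $C_W(\xi)$ and $N_{W^{1/2}}(\xi)$ are both $v\times v$ matrices and therefore also share the multiplicity of the zero eigenvalue; under Definition \ref{dWeightedInformation} that assertion holds only for nonsingular $W$. When $d=\mathrm{rank}(W)<v$, the matrix $C_W(\xi)=(K_W^T C^-(\xi)K_W)^{-1}$ is $d\times d$ and positive definite, while $N_{W^{1/2}}(\xi)=(W^{1/2}C^-(\xi)W^{1/2})^+$ is $v\times v$ of rank $d$, so their zero-eigenvalue multiplicities are $0$ and $v-d$ respectively --- precisely the discrepancy you isolate. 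Thus the theorem's parenthetical ``including multiplicities'' is literally correct only for $W\in\S^v_{++}$, and in the singular case must be read, as you read it, as an identity of the non-zero spectra; your proof repairs a small flaw in the paper's final sentence rather than reproducing it. One caveat on your closing remark that the non-zero spectrum is ``exactly what any eigenvalue-based criterion depends on'': a criterion such as $E$-optimality applied naively to the singular matrix $N_{W^{1/2}}(\xi)$ would return zero while $\lambda_{\min}(C_W(\xi))>0$, so the equivalence of criterion values in the singular case does require the convention that criteria are evaluated on the non-zero spectrum (as the paper effectively does by working with the full-rank $C_W(\xi)$). Your alternative self-contained cyclic-eigenvalue argument is also valid --- it is essentially the computation carried out inside the proof of Theorem \ref{tEigEquivQQT}.
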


\begin{proof}
	The system $W^{1/2}\tau$ is estimable, because $\C(W^{1/2})=\C(W) \subseteq \E$.
	Because $W^{1/2}$ satisfies $W^{1/2}(W^{1/2})^T=W$, the weight matrix $W$ corresponds to the system $W^{1/2}\tau$. Then, Theorem \ref{tEigEquivQQT} yields the identity of the non-zero eigenvalues. Moreover, both $C_W(\xi)$ and $N_{W^{1/2}}(\xi)$ are $v \times v$ matrices, which yields the same multiplicity of the zero eigenvalue.
\end{proof}

A proper weight matrix $W_Q$ should place equal weight on each of the normalized functions $q_i^T\tau$. In the following proposition, we show that this is satisfied for the proposed $W_Q$ as long as the coefficient vectors $q_1, \ldots, q_s$ are linearly independent, i.e., as long as $\mathrm{rank}(Q)=s$.

\begin{proposition}\label{pProperWeights}
	Let $Q^T\tau$ be a full-rank system of normalized estimable functions. Then, $Q^TW_Q^-Q=I_s$. If $b_1, \ldots, b_s$ are the weights assigned to $q_1^T\tau, \ldots, q_s^T\tau$, respectively, then $Q^TW_{\tilde{Q}}^-Q=B^{-1}$, where $\tilde{Q}=QB^{1/2}$.
\end{proposition}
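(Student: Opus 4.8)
The plan is to reduce both assertions to a single identity: for every $v \times s$ matrix $Q$ of full column rank $s$, one has $Q^T(QQ^T)^- Q = I_s$, where the left-hand side is independent of the choice of the generalized inverse. Granting this, the first claim is immediate, because $W_Q = QQ^T$ gives $Q^T W_Q^- Q = Q^T(QQ^T)^- Q = I_s$. I would note in passing that the normalization $\lVert q_i \rVert = 1$ is not actually required for the equality itself; it only serves to interpret the unit diagonal entries as placing weight $1$ on each \emph{unit-norm} function $q_i^T\tau$. What the algebra genuinely uses is full column rank.

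First I would justify that $Q^T(QQ^T)^- Q$ does not depend on the chosen generalized inverse $(QQ^T)^-$. Since $\C(Q)=\C(QQ^T)$, we may write $Q=QQ^T M$ for some matrix $M$; then $Q^T(QQ^T)^- Q = M^T (QQ^T)(QQ^T)^-(QQ^T) M = M^T QQ^T M$, which is free of $(QQ^T)^-$. It therefore suffices to evaluate the expression for any one convenient generalized inverse.

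The core step is the identity itself. Because $Q$ has full column rank, $Q^TQ \in \S^s_{++}$ is invertible, so $Q$ possesses the left inverse $Q^+ = (Q^TQ)^{-1}Q^T$, with $(Q^+)^T = Q(Q^TQ)^{-1}$. Starting from the defining property $QQ^T (QQ^T)^- QQ^T = QQ^T$ and multiplying on the left by $Q^+$ and on the right by $(Q^+)^T$, every factor $Q^TQ$ cancels against an adjacent $(Q^TQ)^{-1}$: the right-hand side collapses to $I_s$, while the left-hand side collapses to $Q^T(QQ^T)^- Q$, yielding $Q^T(QQ^T)^- Q = I_s$. Equivalently, one may substitute the explicit pseudoinverse $(QQ^T)^+ = Q(Q^TQ)^{-2}Q^T$ and simplify directly.

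For the weighted case I would apply this identity to $\tilde Q = QB^{1/2}$. Since $B=\diag(b_1,\ldots,b_s)$ is positive definite, $\tilde Q$ again has full column rank and $W_{\tilde Q} = \tilde Q \tilde Q^T = QBQ^T$, so the identity gives $\tilde Q^T W_{\tilde Q}^- \tilde Q = I_s$. Substituting $\tilde Q = QB^{1/2}$ yields $B^{1/2}\bigl(Q^T W_{\tilde Q}^- Q\bigr)B^{1/2} = I_s$, and multiplying by $B^{-1/2}$ on both sides produces $Q^T W_{\tilde Q}^- Q = B^{-1}$. The only real obstacle is the core identity $Q^T(QQ^T)^- Q = I_s$, which is precisely where full column rank (rather than normalization) is indispensable; everything else is routine bookkeeping.
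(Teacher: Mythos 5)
Your proof is correct and follows essentially the same route as the paper: both parts reduce to the identity $Q^T(QQ^T)^-Q = I_s$, and the weighted case is obtained by applying that identity to $\tilde Q = QB^{1/2}$ and multiplying by $B^{-1/2}$ on both sides, exactly as the paper does. The only difference lies in how the core identity is verified --- the paper observes in one line that $Q^T(QQ^T)^-Q$ is a symmetric idempotent matrix of full rank and hence equals $I_s$, whereas you cancel explicitly against the left inverse $(Q^TQ)^{-1}Q^T$; your version is more elementary and has the small additional merit of spelling out the invariance of $Q^T(QQ^T)^-Q$ under the choice of generalized inverse (and the correct remark that full column rank, not normalization, is what the algebra uses), both of which the paper leaves implicit.
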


\begin{proof}
	We have $Q^TW_Q^-Q = Q^T (QQ^T)^- Q = I_s$, because $Q^T (QQ^T)^- Q$ is a symmetric idempotent matrix of full rank, i.e., $I_s$. The second statement follows from the analogous observation that $\tilde{Q}^T (\tilde{Q}\tilde{Q}^T)^- \tilde{Q} = I_s$, which yields $B^{1/2} Q^TW_{\tilde{Q}}^-Q B^{1/2} =I_s$.
\end{proof}

Clearly, when the functions in the system $Q^T\tau$ are not normalized, the weight of $q_i$ is $\lVert q_i \rVert^2$, because it can be expressed as $q_i = \lVert q_i \rVert \bar{q_i}$, where $\bar{q_i}$ is unscaled.

When the functions $q_1, \ldots, q_s$ are not linearly independent, the matrix $W_Q$ does not place equal weights on all of them (and neither does the matrix $I-P_\tau + QQ^T$, as noted in Section \ref{ssWeightedOpt}). We will elaborate on this in Section \ref{sPrimaryAndSecondary}.

The condition that a proper weight matrix must weight all the unscaled functions of interest equally (given $B=I_s$) does not uniquely define a weight matrix, as we have seen for $W_1 = QQ^T$ and $W_2 = I-P_\tau + QQ^T$. However, such matrices need not even be estimation equivalent. An extreme case is the matrix $W=I_v - J_v/v$, which satisfies $W^+=W$ and $q^T W^- q = 1$ for any $q \in \E$ as long as $\E= 1_v^\perp$. Clearly, $q^T(I_v - J_v/v)q = q^T q = 1,$ for any normalized $q$. Obviously, such a weight matrix does not provide any information about the considered set of estimable functions.

Even when we restrict ourselves to the non-singular matrices, as in \cite{StallingsMorgan}, the condition $q_i^T W^{-1} q_i = 1$ for all $i=1,\ldots, s$, does not uniquely specify a weight matrix, as is shown in the following example.

\begin{example}\label{exMultipleWMatrices}
	Consider the model \eqref{eModelTreat} with $v=3$ and suppose that the objective is to compare the test treatments $2$ and $3$ with the control (the first treatment). That is, $s=2$, $q_1=(-1,1,0)^T/\sqrt{2}$ and $q_2=(-1,0,1)^T/\sqrt{2}$. Let
	$$
	W_1=\begin{bmatrix}
	3/2 & -1/2 & -1/2 \\ -1/2 & 1/2 & 0 \\ -1/2 & 0 & 1/2
	\end{bmatrix}
	\quad \text{with} \quad
	W_1^{-1}=\begin{bmatrix}
	2 & 2 & 2 \\ 2 & 4 & 2 \\ 2 & 2 & 4
	\end{bmatrix}
	$$
	and
	$$
	W_2=\begin{bmatrix}
	5/2 & -1 & -1 \\ -1 & 2/3 & 1/3 \\ -1 & 1/3 & 2/3
	\end{bmatrix}
	\quad \text{with} \quad
	W_2^{-1}=\begin{bmatrix}
	2 & 2 & 2 \\ 2 & 4 & 1 \\ 2 & 1 & 4
	\end{bmatrix}.
	$$
	Then, $Q^T W_1^{-1} Q = Q^T W_2^{-1} Q = I_2$. Consider $q=(0,-1,1)^T/\sqrt{2}$. Then, $(q^T W_1^{-1} q)^{-1} =1/2$ and $(q^T W_2^{-1} q)^{-1} =1/3$, which shows that $W_1$ and $W_2$ are not estimation equivalent. We have found two positive definite weight matrices that assign the required weights to the given functions, yet they assign different weights to other estimable functions. \hfill\textbardbl
\end{example}

Therefore, it is crucial that the proposed weight matrix, $W_Q=QQ^T$, has a meaningful interpretation with respect to the system $Q^T\tau$, other than assigning equal weights to each of the functions in $Q^T\tau$. Theorems \ref{tEigEquiv} and \ref{tEigEquivQQT} state that the eigenvalue-based weighted optimality criteria with the weight matrices $I-P_\tau + QQ^T$ and $QQ^T$ are equivalent to the optimality with respect to the well-established information matrix for $Q^T\tau$. This shows that the weight matrix proposed by \cite{StallingsMorgan} as well as the weight matrix proposed here are indeed relevant when the objective is to estimate $Q^T\tau$. Lemma 1 by \cite{StallingsMorgan} and Proposition \ref{pEigenVar} here show the relevance of the weight matrices with respect to the weighted variances. Furthermore, in Section \ref{ssElements}, we provide a straightforward interpretation of the elements of $W_Q=QQ^T$, which indicates the relevance of the proposed matrix.

\subsection{Elements of the weight matrix}\label{ssElements}

Let us denote the rows of $Q$ by $q_{*1}, \ldots, q_{*v}$; note that $q_{*i}$ are row vectors. Then, the elements of $W_Q$ and $W_{\tilde{Q}}$ satisfy
$$ W_Q(i,j) = q_{*i} q_{*j}^T = \sum_{k=1}^s Q_{ik}Q_{jk}, \quad W_{\tilde{Q}}(i,j) = \tilde{q}_{*i} \tilde{q}_{*j}^T = \sum_{k=1}^s b_k Q_{ik} Q_{jk} \quad i,j=1,\ldots,v,$$
where $\tilde{q}_{*1}, \ldots, \tilde{q}_{*v}$ are the rows of $\tilde{Q}=QB^{1/2}$.
Such a form has a straightforward interpretation. The $i$-th diagonal element of $W_Q$ indeed represents the weight of the $i$-th parameter of interest: it is the sum of squares of all the coefficients for the $i$-th parameter across all functions of interest $q_1^T\tau, \ldots, q_s^T\tau$. The element on the position $(i,j)$, $i \neq j$, represents the amount of interest in the comparison of the $i$-th and $j$-th effects, as it is the sum of all products of the coefficients for the $i$-th and the $j$-th parameters across all functions of interest. High negative values of $W_Q(i,j)$ suggest a significant interest in the actual comparison, e.g.,  $\tau_i-\tau_j$; high positive values suggest a significant interest in their combined effect, e.g.,  $\tau_i+\tau_j$.

For the scaled version $W_{\tilde{Q}}$, the coefficients in each function $q_i^T\tau$ are multiplied by the square of the corresponding weight $b_i^{1/2}$ first, and only then the sums of squares or products are calculated - thus assigning the $i$-th function the relative weight $b_i$, $i=1,\ldots,s$.


In the following example, we demonstrate the previous observations.

\begin{example}\label{exWeightCoefficients}
	Consider model \eqref{eModelTreat} with the experimental objective as in Example \ref{exMultipleWMatrices}. Then, the matrix $W_Q$ is
	$$
	W_Q = QQ^T = \frac{1}{2}\begin{bmatrix}
	2 & -1 & -1 \\ -1 & 1 & 0 \\ -1 & 0 & 1
	\end{bmatrix}.
	$$
	Disregarding the common factor $1/2$, the weight of the first treatment is $2$, because it is present in $Q^T\tau$ two times, whereas the other two treatments have analogously weight 1. Since the second and the third treatment cannot be found in the same contrast, the weight assigned to their comparison is zero; similarly, the negative 'weight' $-1$ represents amount of interest in comparing the effects of the first and of the second (third) treatments.
	
	
	Now, suppose that the contrast $(\tau_2-\tau_1)/\sqrt{2}$ still has the weight $b_1 =1$, but the second treatment comparison  $(\tau_2-\tau_1)/\sqrt{2}$ is given an increased weight $b_2 = 2$. Then, each element of the second contrast is multiplied by $\sqrt{2}$, resulting in
	$$
	W_{\tilde{Q}} = \tilde{Q}\tilde{Q}^T = \frac{1}{2}\begin{bmatrix}
	3 & -1 & -2 \\ -1 & 1 & 0 \\ -2 & 0 & 2
	\end{bmatrix},
	$$
	which places more weight on the first and the second treatment, and on their comparison, as it should, because $b_2=2$ represents greater interest in comparing the first and the third treatment. \hfill\textbardbl
\end{example}

\section{$E$- and $A$-optimality}

Now that we have introduced the concept of the weight matrix corresponding to a system of estimable functions, we can provide the interpretation of the selected weighted optimality criteria for the general weight matrix, not necessarily implied by a system of estimable functions $Q^T\tau$. In particular, we provide interpretations of the weighted criteria of $E$-optimality and $A$-optimality in terms of the weighted variances of the estimable functions, similar (except Proposition \ref{pAoptWeighted}) to those by \cite{StallingsMorgan}.

The value of the weighted $E$-optimality criterion, or $E_W$-optimality criterion in short, is $\Phi_{EW}(\xi)=\lambda_{\min}(C_W(\xi))$.

\begin{proposition}\label{pEoptWeighted}
	Let $\xi \in \Xi$ and let $W$ be a weight matrix. Then, the inverse of the value of $E_W$-optimality is equal to the largest weighted variance over all $q^T\tau$, such that $q \in \C(W)$.
\end{proposition}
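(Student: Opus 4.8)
The plan is to express the largest weighted variance as a supremum of a ratio involving $C_W(\xi)$, and then recognise that supremum as the reciprocal of the smallest eigenvalue. First I would recall from Proposition \ref{pEigenVar} that for any $q \in \C(W)$, writing $q = K_W h$ with $h \in \R^d$, the weighted variance is
$$
\Var_W(\widehat{q^T\tau}) = \sum_{i=1}^d \frac{g_i^2}{g^Tg}\lambda_i^{-1}(C_W(\xi)),
$$
a convex combination of the inverse eigenvalues $\lambda_i^{-1}(C_W(\xi))$ with weights $g_i^2/(g^Tg)$, where $g = U^T h$ and $U\Lambda U^T$ is the spectral decomposition of $C_W(\xi)$.

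The key step is then to maximise this convex combination over all admissible $q$, equivalently over all $h \in \R^d \setminus \{0_d\}$. Since $g = U^T h$ ranges over all of $\R^d \setminus \{0_d\}$ as $h$ does ($U$ being orthogonal), the supremum of a convex combination of the values $\lambda_1^{-1}(C_W(\xi)), \ldots, \lambda_d^{-1}(C_W(\xi))$ is exactly the largest of these values, namely $\lambda_d^{-1}(C_W(\xi)) = \lambda_{\min}^{-1}(C_W(\xi))$, attained by concentrating all the weight on the coordinate $g$ corresponding to the smallest eigenvalue (take $h$ to be the relevant eigenvector). Hence
$$
\max_{q \in \C(W),\, q \neq 0_v} \Var_W(\widehat{q^T\tau}) = \frac{1}{\lambda_{\min}(C_W(\xi))} = \frac{1}{\Phi_{EW}(\xi)},
$$
which is the claimed identity.

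The only mild obstacle is bookkeeping: I must confirm that as $q$ ranges over all nonzero vectors of $\C(W)$, the auxiliary vector $g$ genuinely sweeps out all of $\R^d \setminus \{0_d\}$, so that no direction is missed and the maximum is actually attained rather than merely approached. This follows because $C_W(\xi)$ is of full rank $d$ (as noted after Definition \ref{dWeightedInformation}), so all its eigenvalues are positive and $\lambda_{\min}(C_W(\xi)) > 0$; the map $h \mapsto g = U^T h$ is a bijection of $\R^d$; and $q = K_W h$ traverses $\C(K_W) = \C(W)$ bijectively up to the kernel, which is trivial on the relevant directions since $K_W$ has full column rank $d$. Thus the maximum is genuinely achieved by an eigenvector of $C_W(\xi)$ associated with $\lambda_{\min}(C_W(\xi))$, completing the argument.
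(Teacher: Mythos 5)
Your proof is correct and, at bottom, takes the same route as the paper's: both reduce the weighted variance to a normalized quadratic form in $\R^d$ via the parametrization $q = K_W h$ together with the identity $K_W^T W^- K_W = I_d$, and then identify the maximum of that form with $\lambda_{\min}^{-1}(C_W(\xi))$. The only difference is packaging: the paper inlines this as a Rayleigh-quotient computation, writing $\lambda_{\min}(C_W(\xi))^{-1} = \lambda_{\max}(K_W^T C^-(\xi) K_W) = \max_x \, x^T K_W^T C^-(\xi) K_W x / x^T x$, whereas you invoke Proposition \ref{pEigenVar} and maximize the resulting convex combination of the $\lambda_i^{-1}(C_W(\xi))$ over the achievable weight vectors $g_i^2/(g^T g)$ --- an equivalent argument, with the minor bonus that yours exhibits the maximizer explicitly as $q = K_W h$ with $h$ an eigenvector of $C_W(\xi)$ for $\lambda_{\min}$.
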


\begin{proof}
	We have
	$$
	\lambda_{\min}(C_W(\xi))=(\lambda_{\max}(K_W^T C^-(\xi) K_W))^{-1} = \Big(\max_{x \in \R^d} \frac{x^T K_W^T C^-(\xi) K_W x}{x^Tx}\Big)^{-1}.
	$$
	The largest weighted variance over all $q^T\tau$, such that $q \in \C(W)$, is
	$$
	\max_{q \in \C(W)} \frac{q^T C^-(\xi) q}{q^T W^- q} = \max_{z \in \R^d} \frac{z^T K_W^T C^-(\xi) K_W z}{z^T K_W^T W^- K_W z} = \max_{z \in \R^d} \frac{z^T K_W^T C^-(\xi) K_W z}{z^T z},
	$$
	where the first equality follows from the fact that $\C(W)=\C(K_W)$, and thus for any $q \in \C(W)$ there exists $z \in \R^d$, such that $q=K_Wz$. Since $W=K_WK_W^T$, we have $K_W^T (K_WK_W^T)^- K_W = I_d$, as in the proof of Proposition \ref{pEigenVar}, which yields the second equality.
\end{proof}

From Proposition \ref{pEoptWeighted} it follows that any $E_W$-optimal design minimizes the largest weighted variance over all $q^T\tau$, such that $q \in \C(W)$.
\bigskip

The value of the weighted $A$-optimality criterion, or $A_W$-optimality criterion in short, is $\Phi_{AW}(\xi)=d(\mathrm{tr} (C_W^{-1}(\xi)))^{-1} = d(\mathrm{tr} (K_W^T C^-(\xi)K_W))^{-1} $, see Chapter 6 by \cite{puk}.

\begin{proposition}\label{pAoptWeighted}
	Let $\xi \in \Xi$ and let $W$ be a weight matrix. Then, the inverse of the value of $A_W$-optimality is equal to the average variance of any system of $d$ estimable functions $Q^T\tau$ that satisfy $W_Q = W$.
\end{proposition}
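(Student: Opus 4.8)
The plan is to reduce the whole statement to a single cyclic-trace identity. First I would unpack the two quantities to be compared. The value $\Phi_{AW}(\xi) = d(\mathrm{tr}(K_W^T C^-(\xi) K_W))^{-1}$ has inverse $\frac{1}{d}\mathrm{tr}(K_W^T C^-(\xi) K_W) = \frac{1}{d}\mathrm{tr}(C_W^{-1}(\xi))$. On the other side, for a system $Q^T\tau = (q_1^T\tau,\ldots,q_d^T\tau)^T$ of $d$ estimable functions, each ordinary variance is $\Var(\widehat{q_i^T\tau}) = q_i^T C^-(\xi) q_i$, so the average variance is $\frac{1}{d}\sum_{i=1}^d q_i^T C^-(\xi) q_i = \frac{1}{d}\mathrm{tr}(Q^T C^-(\xi) Q)$. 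The goal is therefore to show that $\mathrm{tr}(Q^T C^-(\xi) Q) = \mathrm{tr}(K_W^T C^-(\xi) K_W)$ whenever $W_Q = QQ^T = W$.

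Next I would record the structural consequences of the hypothesis $QQ^T = W$. Since $\mathrm{rank}(Q) = \mathrm{rank}(QQ^T) = \mathrm{rank}(W) = d$ and $Q$ is $v\times d$, the coefficient vectors $q_1,\ldots,q_d$ are linearly independent, and $\C(Q) = \C(QQ^T) = \C(W) \subseteq \E$, so the $d$ functions are genuinely estimable. Feasibility of $\xi$ (the assumption $\C(W)\subseteq \C(C(\xi))$ underlying the very definition of $C_W(\xi)$) then gives $\C(Q)\subseteq \C(C(\xi))$, so each variance $q_i^T C^-(\xi) q_i$ is finite and independent of the chosen generalized inverse. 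Note also that $K_W$ itself is such a system, since $K_W K_W^T = W$; hence the collection of admissible $Q$ is non-empty.

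The heart of the argument is then the cyclic property of the trace:
\[
\mathrm{tr}(Q^T C^-(\xi) Q) = \mathrm{tr}(C^-(\xi) QQ^T) = \mathrm{tr}(C^-(\xi) W) = \mathrm{tr}(C^-(\xi) K_W K_W^T) = \mathrm{tr}(K_W^T C^-(\xi) K_W).
\]
Because $\C(W) = \C(K_W) \subseteq \C(C(\xi))$, the intermediate quantity $\mathrm{tr}(C^-(\xi) W)$ does not depend on the choice of $C^-(\xi)$, so the same value results for every admissible $Q$; this invariance is precisely the content of the phrase \emph{any} system. Combining, the average variance equals $\frac{1}{d}\mathrm{tr}(K_W^T C^-(\xi) K_W) = \frac{1}{d}\mathrm{tr}(C_W^{-1}(\xi)) = (\Phi_{AW}(\xi))^{-1}$, as claimed.

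I do not anticipate a genuine obstacle here; the only points requiring care rather than real difficulty are justifying that $\mathrm{tr}(C^-(\xi) W)$ is invariant under the choice of generalized inverse (which follows from $\C(W)\subseteq\C(C(\xi))$, exactly as the well-definedness of $q^T C^- q$ was used in Proposition \ref{pEigenVar}) and confirming that every factorization $QQ^T = W$ yields a legitimate full-rank estimable system. Both are dispatched by the column-space bookkeeping above.
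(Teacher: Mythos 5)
Your proof is correct, and its central step takes a genuinely different (and shorter) route than the paper's. Both arguments reduce the proposition to the identity $\mathrm{tr}(Q^T C^-(\xi) Q) = \mathrm{tr}(K_W^T C^-(\xi) K_W)$, but you obtain it in one line from cyclicity of the trace, $\mathrm{tr}(Q^T C^-(\xi) Q) = \mathrm{tr}(C^-(\xi)\, QQ^T) = \mathrm{tr}(C^-(\xi)\, K_W K_W^T) = \mathrm{tr}(K_W^T C^-(\xi) K_W)$, using nothing beyond the hypothesis $QQ^T = W = K_W K_W^T$; your side remarks ($\C(Q)=\C(W)\subseteq\C(C(\xi))$, hence invariance under the choice of $C^-(\xi)$) are exactly the right bookkeeping. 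The paper instead factors $Q = K_W Z$ (possible because $\C(Q)=\C(K_W)$), deduces $ZZ^T = I_d$ by cancelling the non-singular matrix $K_W^T K_W$, and only then cycles $Z$ around the trace. Your argument is more elementary, avoiding the rank argument and the explicit factor $Z$, and it addresses the literal claim (ordinary average variance) directly, whereas the paper starts from the average \emph{weighted} variance and first verifies $Q^T W^- Q = I_d$, i.e., that every $q_i^T\tau$ receives weight one, so that weighted and ordinary variances coincide --- a fact your proof never needs but which has interpretive value of its own. What the paper's heavier machinery buys is reuse: the $Z$-factorization is precisely the device that carries over to Proposition \ref{pAoptWeightedOrt}, where the functions are only assumed mutually $W$-orthogonal with $\C(Q)\subseteq\C(W)$, so the analogue of $QQ^T=W$ (namely $QDQ^T=W$) must be \emph{derived} via that factorization rather than assumed, and your one-line cyclicity identity is not immediately available there.
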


\begin{proof}
	We have
	$(\Phi_{AW}(\xi))^{-1} = \mathrm{tr}(K_W^T C^-(\xi) K_W)/d.$
	Let $Q^T\tau$ be a system of $d$ estimable functions, such that $QQ^T=W$. Then, the average weighted variance for $q_1^T\tau, \ldots, q_d^T\tau$ is
	$$
	\frac{1}{d}\sum_{i=1}^d (q_i^T W^- q_i)^{-1} \Var(\widehat{q_i^T\tau}) = \mathrm{tr}(D Q^T C^-(\xi) Q)/d,
	$$
	where $D=\diag((q_1^T W^- q_1)^{-1}, \ldots, (q_d^T W^- q_d)^{-1})$. Moreover, $Q^T W^- Q = Q^T (QQ^T)^- Q = I_d$, as in the proof of Proposition \ref{pEigenVar}. Thus, $D=I_d$ and the average weighted variance is equal to $\mathrm{tr}(Q^T C^-(\xi) Q)/d$.
	
	From $QQ^T = K_WK_W^T$ follows that $\C(Q) = \C(QQ^T) = \C(K_W)$; therefore, there exists a $d \times d$ matrix $Z$, such that $Q=KZ$. Then, $K_WK_W^T = QQ^T = K_WZZ^T K_W^T $. By pre- and post-multiplying the equation $K_WK_W^T = K_WZZ^TK_W^T$ by $K_W^T$ and $K_W$, respectively, we obtain $K_W^T K_W K_W^T K_W = K_W^T K_W ZZ^T K_W^T K_W$. Because $K_W$ is of full column rank, the matrix $K_W^T K_W$ is non-singular, which yields $ZZ^T = I_d$. Therefore, the average weighted variance is equal to
	$$
	\mathrm{tr}(Z^T K_W^T C^-(\xi) K_W Z)/d = \mathrm{tr}( K_W^T C^-(\xi) K_W Z Z^T)/d = \mathrm{tr}( K_W^T C^-(\xi) K_W)/d = (\Phi_{AW}(\xi))^{-1}.
	$$
\end{proof}

In particular, from Proposition \ref{pAoptWeighted} it follows that the $A_W$-optimal design minimizes the average variance for $K_W^T\tau$.

In the following proposition, we also provide the interpretation of $A_W$-optimality analogous to that by \cite{StallingsMorgan}. We say that two estimable functions $q_1^T\tau$ and $q_2^T\tau$ are orthogonal with respect to the weight matrix $W$, or $W$-orthogonal in short, if $q_1^T W^- q_2 = 0$.

\begin{proposition}\label{pAoptWeightedOrt}
	Let $\xi \in \Xi$ and let $W$ be a weight matrix. Then, the inverse of the value of $A_W$-optimality is equal to the average variance for any system of $d$ mutually $W$-orthogonal estimable functions $Q^T\tau$, such that $\C(Q) \subseteq \C(W)$.
\end{proposition}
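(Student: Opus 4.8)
The plan is to follow the template of the proof of Proposition~\ref{pAoptWeighted}, starting from the identity $(\Phi_{AW}(\xi))^{-1} = \mathrm{tr}(K_W^T C^-(\xi) K_W)/d$ recorded just before that proposition, and reducing the average variance of the $W$-orthogonal system to this trace. Throughout I would abbreviate $M := K_W^T C^-(\xi) K_W$.

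First I would verify that a system of $d$ mutually $W$-orthogonal functions with $\C(Q) \subseteq \C(W)$ in fact spans $\C(W)$. Since the bilinear form $q \mapsto q^T W^- q$ is positive definite on $\C(W)$ (as observed after Definition~\ref{dWeightMatrix}, it vanishes only at $q = 0_v$), mutual $W$-orthogonality forces $q_1, \ldots, q_d$ to be linearly independent; being $d$ independent vectors in the $d$-dimensional space $\C(W) = \C(K_W)$, they form a basis. Hence $\C(Q) = \C(K_W)$, so there is a $d \times d$ matrix $Z$ with $Q = K_W Z$, and $Z$ is nonsingular because both $Q$ and $K_W$ have full column rank $d$.

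Next I would translate the orthogonality of $Q$ into a statement about $Z$. Using $K_W^T W^- K_W = I_d$, which is the symmetric idempotent identity already invoked in the proof of Proposition~\ref{pEigenVar}, I get $Q^T W^- Q = Z^T K_W^T W^- K_W Z = Z^T Z$. The assumption that the $q_i$ are $W$-orthonormal (unit $W$-weight, as is needed for the \emph{unweighted} average variance to be the relevant quantity) gives $Q^T W^- Q = I_d$, hence $Z^T Z = I_d$; since $Z$ is square this also yields $Z Z^T = I_d$. I expect this orthogonality of $Z$ to be the crux of the argument --- it is the analogue of the step $ZZ^T = I_d$ in Proposition~\ref{pAoptWeighted}, but now driven by the $W$-orthonormality of $Q$ rather than by the equation $QQ^T = W$.

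Finally I would compute, using $\Var(\widehat{q_i^T\tau}) = q_i^T C^-(\xi) q_i$ and the cyclic invariance of the trace,
$$\frac{1}{d}\sum_{i=1}^{d}\Var(\widehat{q_i^T\tau}) = \frac{1}{d}\,\mathrm{tr}\big(Q^T C^-(\xi) Q\big) = \frac{1}{d}\,\mathrm{tr}\big(Z^T M Z\big) = \frac{1}{d}\,\mathrm{tr}\big(M Z Z^T\big) = \frac{1}{d}\,\mathrm{tr}(M),$$
which equals $(\Phi_{AW}(\xi))^{-1}$. The main obstacle is really the normalization bookkeeping rather than any deep difficulty: if one only assumes $W$-orthogonality without unit $W$-weight, the columns of $Z$ are merely orthogonal, $ZZ^T \neq I_d$, and one must instead rescale each $q_i$ to unit $W$-weight --- equivalently pass to the weighted variances with weights $(q_i^T W^- q_i)^{-1}$ --- so that the orthonormally rescaled $\tilde Z$ satisfies $\tilde Z \tilde Z^T = I_d$ before the trace identity can be applied.
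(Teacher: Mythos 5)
Your proposal is correct and takes essentially the same route as the paper's proof: write the (suitably rescaled) coefficient matrix as $Q = K_W Z$, deduce $Z^T Z = I_d$ (hence $ZZ^T = I_d$, since $Z$ is square) from the identity $K_W^T W^- K_W = I_d$, and conclude by cyclic invariance of the trace. The paper handles the normalization issue you flag in your last paragraph directly, by conjugating with $D^{1/2}$ where $D = \mathrm{diag}\big((q_1^T W^- q_1)^{-1}, \ldots, (q_d^T W^- q_d)^{-1}\big)$ --- that is, it computes the average \emph{weighted} variance of a general $W$-orthogonal system --- which is exactly the rescaling bookkeeping you describe, so the two arguments coincide.
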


\begin{proof}
	As in Proposition \ref{pAoptWeighted}, the average weighted variance for $W$-orthogonal estimable functions $q_1^T\tau, \ldots, q_d^T\tau$ is $\mathrm{tr}(D Q^T C^-(\xi) Q)/d $, with $Q$ and $D$ defined as in the proof of Proposition \ref{pAoptWeighted}.
	
	The $W$-orthogonality of the considered functions yields that $Q^T W^- Q = D^{-1}$, which is equivalent to $D^{1/2} Q^T W^- Q D^{1/2} = I_d$.
	From $\C(QD^{1/2}) = \C(Q) \subseteq \C(W) = \C(K_W)$ follows that there exists a $d \times d$ matrix $Z$, such that $QD^{1/2}=KZ$. Therefore, $I_d = D^{1/2} Q^T W^- Q D^{1/2} = Z^T K_W^T W^- K_W Z = Z^T Z$, because $K_W^T W^- K_W = I_d$ as in the proof of Proposition \ref{pEigenVar}. That is, $Z$ is an orthogonal matrix. Then, the average variance can be expressed as
	$$\begin{aligned}
	\mathrm{tr}(D Q^T C^-(\xi) Q)/d 
	&= \mathrm{tr}(D^{1/2} Q^T C^-(\xi) Q D^{1/2})/d = \mathrm{tr}(Z^T K_W^T C^-(\xi) K_W Z )/d \\
	&= \mathrm{tr}(K_W^T C^-(\xi) K_W Z Z^T)/d = (\Phi_A(\xi))^{-1}.
	\end{aligned}$$
\end{proof}

Clearly, the maximum number of functions that can be mutually $W$-orthogonal is $d$, as $Q^T W^- Q = I_s$ cannot be achieved by a $v \times s$ matrix $Q$ with $s>d$. Moreover, the condition $\C(Q) \subseteq \C(W)$ ensures that the estimable functions can be weighted by $W$. Therefore, Proposition \ref{pAoptWeighted} shows that any $A_W$-optimal design minimizes the average variance for any system of $W$-orthogonal estimable functions of maximal size that can be weighted by $W$.

\section{Primary and secondary weights}\label{sPrimaryAndSecondary}

\subsection{Inconsistency}

There is a seeming inconsistency in the weighted optimality: seemingly consistent systems of weights produce inconsistent weight matrices. When we use a set of functions $Q^T\tau$ to construct the treatment weights, these weights imply a weight for any function $q^T\tau$ in $\C(Q)$, representing the amount of interest in this function. However, in general, we cannot consistently use any subset of functions in $\C(Q)$ with their implied weights, other than $Q^T\tau$, as an equivalent way of describing the experimental objective, because then the weight matrix would change, and the implied weights for the original functions $Q^T\tau$ would be different. The example below demonstrates this.

\begin{example}\label{exInconsistent}
	Consider the setting of comparing two test treatments with a control, as in Example \ref{exMultipleWMatrices}, with weights $b_1 = b_2 = 1$. Let $q_3^T\tau = (\tau_3-\tau_2)/\sqrt{2}$. Then, $q_3 = Q h$, where $h=(-1,1)^T$, and $q_3^T\tau$ has weight inverse to $h^T Q^T W_Q^- Q h = h^T h = 2 $. Thus, we have the amount of interest in $q_3^T\tau$ represented by the weight $b_3=1/2$.
	
	Now, consider $q_1^T\tau$ and $q_3^T\tau$, together denoted as $\bar{Q}^T\tau$, with the corresponding weights $b_1=1$ and $b_3 = 1/2$, and let us denote $W_2 = \bar{Q} B \bar{Q}^T$. Then, $q_2 = \bar{Q} h_2$, where $h_2 = (1,1)^T$, which yields the weight of $q_2$ inverse to $h^T Q^T W_2^- Q h = h^T B^{-1} h = 3$. Therefore, the weight implied for $q_2$ is $1/3$, which is inconsistent with the original weight $b_2=1$. It follows that the system of contrasts $q_1^T\tau$ and $q_2^T\tau$ with weights $b_1$ and $b_2$ implies a weight $b_3$ for $q_3^T\tau$; but that is not the same as building a weight matrix from $q_1^T\tau$ and $q_3^T\tau$ with the consistent weights $b_1$ and $b_3$.
	
	In other words, although the original system of contrasts implies a weight $1/2$ for the contrast $q_3^T\tau$, we cannot actually use this weight to equivalently define the same system of weights.  \hfill\textbardbl
\end{example}

Therefore, given a system of estimable functions $Q^T\tau$, we propose to distinguish between the \emph{primary weights} $b_1, \ldots, b_s$ of the original functions of interest, and the \emph{secondary} (or \emph{implied}) \emph{weights} $w(q) = (q^T W_{\tilde{Q}}^- q)^{-1}$  given by the weight matrix $W_{\tilde{Q}}$ for any function $q^T\tau$ that satisfies $q \in \C(Q)$. These two systems of weights are not equivalent: the primary weights are pre-specified for a finite number of functions of interest, and they represent the primary interest assigned to these functions; the secondary weights represent the amount of interest in any $q \in \C(Q)$ implied by the functions of interest and their primary weights.

In Example \ref{exInconsistent}, the original weights $b_1=b_2=1$ are the primary weights for $q_1^T\tau$ and $q_2^T\tau$, respectively. By contrast, the weight $w(q_3)=1/3$ of $q_3^T\tau$ is not a primary weight, but one that is implied by the two functions of interest.

\subsection{Rank-deficient systems}

For a rank-deficient system of functions of interest, we may wish to assign the same weight to each of the functions of interest, similarly as in Proposition \ref{pProperWeights} for the full rank systems. However, in general, this does not happen with the proposed weight matrix $QQ^T$, nor with the matrix $I-P_\tau + QQ^T$ proposed by \cite{StallingsMorgan} (as noted in Section \ref{ssWeightedOpt}). Consider the extreme case, where $Q=(q_1,q_2)$, where $q_1=q_2$, with $b_1=b_2=1$, i.e., we have two functions of interest, which are identical. Then, it is easy to see that $QQ^T = 2q_1q_1^T$ and $q_i^T (QQ^T)^- q_i = 1/2$ for $i=1,2$, which yields the \emph{secondary} weights of $q_1^T\tau$ and $q_2^T\tau$ to be $w(q_1)=w(q_2)=2$.

Without differentiating between the primary and the implied weights, such a result might seem nonsensical - because the function $q_1^T\tau$ is assigned the weight $b_1=1$, and yet the corresponding weighting by $W_Q$ would at the same time provide double the weight for $q_1^T\tau$. However, since the function $q_1^T\tau$ is present twice in the system of functions of interest, it is natural that its \emph{implied} weight $w(q_2)$ is 2, unlike its primary weight $b_1=1$.

Let us return to Example \ref{exInconsistent}.

\begin{example}
	Consider the setting as in Example \ref{exInconsistent}, and let $Q=(q_1, q_2, q_3)$ with $b_1=b_2=1$ and $b_3=1/2$. That is, we have added the contrast $q_3^T\tau$ to the original system of two contrasts, with its weight given by the first two contrasts, see Example \ref{exInconsistent}. Then,
	$$
	\tilde{Q}=\frac{1}{\sqrt{2}}\begin{bmatrix}
	-1 & -1 & 0 \\ 1 & 0 & -1/\sqrt{2} \\ 0 & 1 & 1/\sqrt{2}
	\end{bmatrix}, \text{ and }
	G=\begin{bmatrix}
	3/2 & 1 & 0 \\ 1 & 2 & 0 \\ 0 & 0 & 0
	\end{bmatrix}
	$$
	is a generalized inverse of $\tilde{Q}\tilde{Q}^T$. Then, it is easy to calculate that $w(q_3) = (q_3^T G q_3)^{-1} = 1$, and $w(q_1) = w(q_2) = 4/3$. Since we have added a third contrast, which is confounded with the first two contrasts, the third contrast provides some secondary weight for the other two, and vice versa. Then, it is to be expected that the implied weight of each of $q_1^T\tau, q_2^T\tau, q_3^T\tau$ is greater than its primary weight, not even preserving the relative weights. \hfill\textbardbl
\end{example}

We have demonstrated that in the rank-deficient case a statement analogous to Proposition \ref{pProperWeights} does not hold. In fact, employing the notion of the secondary weights, in general such a statement should not hold: i.e., given $B=I_s$, all of the secondary weights implied by $W_Q$ should not always be equal to 1. Because of the confounding in the rank-deficient systems, the functions of interest provide each other some of their weight, resulting in secondary weights different from the primary ones.

Intuitively, the secondary weights should be at least equal to the primary ones, because of the 'sharing' of the weights. This is demonstrated in the following proposition.

\begin{proposition}
	Let $Q^T\tau$ be a system of estimable functions with weights $b_1 ,\ldots, b_s$, and let  $W_{\tilde{Q}}$ be the corresponding weight matrix. Then, the secondary weight of any of the functions $q_i^T\tau$ with respect to $W_{\tilde{Q}}$ is at least $b_i$, i.e., $w(q_i) \geq b_i$, $i=1,\ldots, s$. Moreover, if $q_i$ can be expressed as a linear combination of the other columns of $Q$, then $w(q_i) > b_i$.
\end{proposition}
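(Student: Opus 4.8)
The plan is to reduce the statement to a computation of the diagonal entries of the $s\times s$ matrix $Q^T W_{\tilde{Q}}^- Q$. Writing $e_i$ for the $i$-th standard unit vector in $\R^s$, we have $q_i = Q e_i$, so the secondary weight is
$$w(q_i) = \left(q_i^T W_{\tilde{Q}}^- q_i\right)^{-1} = \left(e_i^T Q^T W_{\tilde{Q}}^- Q\, e_i\right)^{-1},$$
and since $q_i \in \C(Q) = \C(W_{\tilde{Q}})$ this quantity is well-defined and independent of the chosen generalized inverse. Thus everything hinges on the diagonal of $M := Q^T W_{\tilde{Q}}^- Q$.

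The key step is to identify $\tilde{Q}^T W_{\tilde{Q}}^- \tilde{Q}$, where $\tilde{Q} = QB^{1/2}$ and $W_{\tilde{Q}} = \tilde{Q}\tilde{Q}^T$. I claim this equals $P$, the orthogonal projector onto the row space $\C(\tilde{Q}^T)$ --- the natural generalization of the identity $\tilde{Q}^T(\tilde{Q}\tilde{Q}^T)^-\tilde{Q} = I_s$ used in the proof of Proposition \ref{pProperWeights}, which held only in the full-rank case. To verify this, I would check, exactly as in the proof of Proposition \ref{pEigenVar}, that $G := \tilde{Q}^T W_{\tilde{Q}}^- \tilde{Q}$ is symmetric and idempotent (using $\tilde{Q}\tilde{Q}^T = W_{\tilde{Q}}$ and $W_{\tilde{Q}} W_{\tilde{Q}}^- \tilde{Q} = \tilde{Q}$, which follows from $\C(\tilde{Q})=\C(W_{\tilde{Q}})$), hence an orthogonal projector, and that $Gx = x$ for every $x \in \C(\tilde{Q}^T)$, so that $\C(G) = \C(\tilde{Q}^T)$. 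Substituting $\tilde{Q} = QB^{1/2}$ into $G = P$ gives $B^{1/2} M B^{1/2} = P$, hence $M = B^{-1/2} P B^{-1/2}$, and therefore
$$q_i^T W_{\tilde{Q}}^- q_i = M_{ii} = b_i^{-1} e_i^T P e_i = b_i^{-1} P_{ii}.$$

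The inequality then follows immediately from the fact that $P$ is an orthogonal projector: $P_{ii} = e_i^T P e_i = \lVert P e_i\rVert^2 \le \lVert e_i \rVert^2 = 1$, while $P_{ii} > 0$ because $q_i \neq 0$ forces $q_i^T W_{\tilde{Q}}^- q_i > 0$. Hence $w(q_i) = b_i/P_{ii} \ge b_i$. For the strict inequality, I observe that $w(q_i) = b_i$ exactly when $P_{ii} = 1$, i.e.\ (since $\lVert Pe_i\rVert^2 + \lVert (I-P)e_i\rVert^2 = 1$) exactly when $e_i \in \C(\tilde{Q}^T)$; and because $B^{\mp 1/2} e_i = b_i^{\mp 1/2} e_i$ is a scalar multiple of $e_i$, this holds iff $e_i \in \C(Q^T)$. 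If now $q_i = \sum_{j\ne i} c_j q_j$, then setting $z := e_i - \sum_{j\ne i} c_j e_j$ gives $Qz = 0$, so $z \in \N(Q) = \C(Q^T)^\perp$, while $e_i^T z = 1 \ne 0$; hence $e_i \notin \C(Q^T)$, so $P_{ii} < 1$ and $w(q_i) > b_i$.

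The main obstacle is establishing the projector identity $\tilde{Q}^T W_{\tilde{Q}}^- \tilde{Q} = P$ and its independence of the generalized inverse in the rank-deficient case; once this is in hand, extracting the diagonal and reading off $P_{ii} \in (0,1]$ --- with the linear-dependence criterion pinning down when $P_{ii}=1$ --- is routine.
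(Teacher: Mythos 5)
Your proof is correct and takes essentially the same approach as the paper's: both arguments identify $Q^T(QQ^T)^-Q$ (resp.\ its scaled analogue) as the orthogonal projector onto the row space $\C(Q^T)$, bound its diagonal entries by $1$, and show that equality would force $e_i \in \C(Q^T)$, which is impossible when $q_i$ is a linear combination of the other columns. The only cosmetic differences are that the paper proves the unscaled case first and then rescales (rather than working with $W_{\tilde{Q}}$ throughout), states the bound via the spectral norm $\rho(P_Q)=1$ rather than the projector diagonal, and derives the final contradiction from a vector $z$ with $Q^Tz=e_i$ instead of your dual formulation with $z\in\N(Q)$ not orthogonal to $e_i$.
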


\begin{proof}
	First, consider the unscaled version $W_Q = QQ^T$. The weight of $q_i^T\tau$ is inverse to $q_i^T (QQ^T)^- q_i = e_i^T Q^T (QQ^T)^- Q e_i$, where $e_i$ is the $i$-th column of $I_s$. The matrix $P_Q:= Q^T (QQ^T)^- Q$ is idempotent, therefore $\rho(P_Q)=1$, where $\rho$ is the spectral norm of a matrix. Then,
	$$e_i^T P_Q e_i = e_i^T P_Q P_Q e_i = \lVert P_Q e_i \rVert_2^2 \leq \rho^2(P_Q) \lVert e_i \rVert_2^2 = 1.$$
	Therefore, $w(q_i) \geq 1$.
	Now, consider $W_{\tilde{Q}} = \tilde{Q}\tilde{Q}^T$. Then, $\tilde{q}_i^T (\tilde{Q}\tilde{Q}^T)^- \tilde{q}_i \leq 1$, the same as for $W_Q$. From $\tilde{q}_i = b_i^{1/2} q_i$ follows that $b_i w^{-1}(q_i) \leq 1$, which implies $w(q_i) \geq b_i$.
	
	Note that equality is attained in $\lVert P_Q e_i \rVert_2^2 \leq \rho^2(P_Q) \lVert e_i \rVert_2^2$ if and only if $e_i \in \arg\max_x \lVert P_Q x \rVert_2/\lVert x \rVert_2$, i.e., if and only if $e_i$ is an eigenvector corresponding to $\lambda_1(P_Q) = 1$. The last condition is equivalent to $e_i \in \C(Q^T)$, because $P_Q$ is the projector on $\C(Q^T)$. 
	
	Suppose that $q$ can be expressed as a linear combination of the other columns of $Q$, $q_i = \sum_{j \neq i} \alpha_j q_j$. Moreover, suppose that $w(q_i) =b_i$, i.e., $\lVert P_Q e_i \rVert_2^2 = \rho^2(P_Q) \lVert e_i \rVert_2^2$, which is equivalent to $e_i \in \C(Q^T)$. Then there exists a $z \in \R^v$, such that $Q^T z = e_i$, i.e., $q_i^T z = 1$ and $q_j^T z = 0$ for $j \neq i$. But then $q_i^T z = \sum_{j \neq i} \alpha_j q_j^T z = 0$, which contradicts $q_i^T z = 1$.
\end{proof}

\section{Discussion}

If the objective of the experiment is to estimate a system of estimable functions $Q^T\tau$, possibly with weights $b_1, \ldots, b_s$, and an eigenvalue-based optimality criterion is considered, then the theory of the weighted optimality criteria is equivalent to the theory of optimality for $Q^T\tau$ (using $N_Q(\xi)$), as shown in Theorems \ref{tEigEquiv} and \ref{tEigEquivQQT}. However, the latter approach may be more appropriate, as it does not require the construction of the weight matrix and it is a well-developed and thoroughly justified theory tailored for estimating a system of estimable functions. When some non-uniform weights $b_i$ are considered, the weighting can be simply done by rescaling $Q$ to $QB^{1/2}$, i.e., by multiplying each coefficient vector $q_i$ by $b_i^{1/2}$.

When the weighted optimality is chosen, we have extended its scope, so that it includes the systems of estimable functions with rank less than $\dim(\E)$, as well as singular weight matrices. Consequently, the weight matrix $W_Q=QQ^T$ can be used instead of that by \cite{StallingsMorgan}. Such a weight matrix is of a simpler form, and has a straightforward interpretation of its elements (see Section \ref{ssElements}).

We noted that it is necessary to differentiate between the primary and the secondary (implied) weights, as they are not equivalent. In fact, given a set of functions of interest $Q^T\tau$, the calculation of the implied weights for other estimable functions allows for an alternative use of the weight matrices. Instead of being an alternative to the 'standard' optimality for $Q^T\tau$, the corresponding weight matrix $W_Q$ can be viewed as an addition to the optimality theory for $Q^T\tau$. It provides a tool for analysis of the functions of interest and other estimable functions: the corresponding weight matrix $W_Q$ allows one to calculate the secondary weights, which measure the implied amount of interest in any estimable function $q^T\tau$ satisfying $q \in \C(Q)$, given the original system of (weighted) estimable functions $Q^T\tau$. Such an analysis was performed in Section 4.2 by \cite{StallingsMorgan}.

\bibliographystyle{plainnat}
\bibliography{rosa.bib}

\end{document}